\documentclass[12pt]{amsart}

\usepackage{amssymb, amsmath, amsthm}
\usepackage[foot]{amsaddr}
\usepackage[text={6in,9in},centering]{geometry}
\usepackage[misc]{ifsym}

\newcommand{\pD}[2]{\frac{\partial #1}{\partial #2}}
\newcommand{\rD}[2]{\frac{d #1}{d #2}}

\newcommand{\vn}[1]{\lVert#1\rVert}
\newcommand{\IP}[2]{\left< #1 , #2 \right>}

\newcommand{\R}{\ensuremath{\mathbb{R}}}

\newcommand{\SW}{\ensuremath{\mathcal{W{}}}}

\newcommand{\SH}{\ensuremath{\mathcal{H{}}}}

\newcommand{\BW}{\ensuremath{\text{\bf W}}}
\newcommand{\BH}{\ensuremath{\text{\bf H}}}

\newtheorem{thm}{Theorem}{\bf}{\it}
\newtheorem{cor}[thm]{Corollary}{\bf}{\it}
\newtheorem{prop}[thm]{Proposition}{\bf}{\it}
\newtheorem{lem}[thm]{Lemma}{\bf}{\it}
{\bf}{\it}
{\bf}{\it}
{\bf}{\it}
\newtheorem*{defn}{Definition}{\bf}{\rm}
\newtheorem*{rmk}{Remark}{\bf}{\rm}

\title{A classification theorem for Helfrich surfaces}

\author{James McCoy$^1$}
\author{Glen Wheeler$^2$}
\address{$^1$James McCoy\\
         Institute for Mathematics and Applied Statistics\\
         University of Wollongong\\
         Northfields Ave\\
         Wollongong, NSW 2500, Australia
}
\address{$^2$Glen Wheeler (\Letter)\\
         Institut f\"ur Analysis und Numerik\\
         Otto-von-Guericke-Universit\"at\\
         Postfach 4120\\
	 D-39016 Magdeburg, Germany
}
\thanks{Financial support for Glen Wheeler from the Alexander-von-Humboldt Stiftung is gratefully acknowledged}
\thanks{\emph{Email address:} {\tt wheeler@ovgu.de}}

\begin{document}

\begin{abstract}
In this paper we study the functional $\SW_{\lambda_1,\lambda_2}$, which is the
the sum of the Willmore energy, $\lambda_1$-weighted surface area, and $\lambda_2$-weighted volume, for surfaces
immersed in $\R^3$.
This coincides with the Helfrich functional with zero `spontaneous curvature'.
Our main result is a complete classification of all smooth immersed critical points of the functional with
$\lambda_1\ge0$ and small $L^2$ norm of tracefree curvature.
In particular we prove the non-existence of critical points of the functional for which the surface area and enclosed
volume are positively weighted.
\end{abstract}

\keywords{global differential geometry \and fourth order \and geometric analysis \and higher order elliptic partial
differential equations}
\subjclass[2000]{35J30 \and 58J05 \and 35J62 }

\maketitle


\section{Introduction}

Consider a surface $\Sigma$ immersed via a smooth immersion $f:\Sigma\rightarrow\R^3$.
There are several functionals of $f$ relevant to our study:
\begin{align*}
\mu(\Sigma) &= \int_\Sigma d\mu
\text{, the surface area}\\
\text{Vol $\Sigma$} &= \int_{\Sigma} f^*(d\SH^3)
\text{, the signed enclosed volume}\\
\SW(f) &= \frac{1}{4}\int_\Sigma |H|^2d\mu
\text{, the Willmore energy}\\
\SH^{c_0}_{\lambda_1,\lambda_2}(f)
       &= \frac{1}{4}\int_\Sigma (H-c_0)^2d\mu + \lambda_1\mu(\Sigma) + \lambda_2\text{Vol $\Sigma$}
\text{, the Helfrich energy.}
\end{align*}
In the above we have used $d\mu$ to denote the area element induced by $f$ on $\Sigma$, $d\SH^3$ to denote Hausdorff measure
in $\R^3$, $H$ to denote the mean curvature, and $c_0,\lambda_1,\lambda_2$ are real numbers.
When $f:\Sigma\rightarrow\R^3$ is not an embedding, we still require the definition of the enclosed volume to make
sense.
For this reason we have used the pull-back of the Euclidean volume element $f^*(d\SH^3)$ by the map $f$, as is standard
(see \cite[Lemma 2.1]{BdCE88} for example).
Our notation is further clarified in Section 2.

The Helfrich functional $\SH^{c_0}_{\lambda_1,\lambda_2}$ is of great interest in applications.
Although it appears to have first been studied by Schadow \cite{S22}, its modern form and subsequent popularity is due
to Helfrich \cite{H73} (just as the Willmore functional was considered by Germain \cite{B80sophie,G21}, Poisson
\cite{P1812}, Thomsen \cite{T23}, and others, long before the time of Willmore \cite{W65}), who famously proposed that
the minimisers of the functional model the shape of an elastic lipid bilayer, such as a biomembrane.
Since this time the model has enjoyed considerable popularity.  Despite this, relatively few analytical results can be
found in the literature.

Critical points of the functional $\SH^{c_0}_{\lambda_1,\lambda_2}$ are called \emph{Helfrich surfaces} and solve the
Euler-Lagrange equation
\[
\BH^{c_0}_{\lambda_1,\lambda_2}(f) := -\bigg(\Delta H + H|A^o|^2 + c_0\Big(2K-\frac{1}{2}Hc_0\Big)
                  - 2\lambda_1H - 2\lambda_2\bigg)
 = 0,
\]
where $A^o$ and $K$ denote the tracefree second fundamental form and Gauss curvature of $f$ respectively.
For the reader's convenience we briefly derive this equation in Lemma \ref{LMeveq}.

Consider the functional $\SW_{\lambda_1,\lambda_2} = \SH^0_{\lambda_1,\lambda_2}$.
The functional $\SW_{\lambda_1,\lambda_2}$ differs from the Willmore functional $\SW = \SW_{0,0}$ by the surface area
and volume terms only, which serve to punish or reward additional surface area and volume depending on the sign of
$\lambda_1$ and $\lambda_2$ respectively.
Critical points of $\SW_{\lambda_1,\lambda_2}$ satisfy
\begin{equation}
\BW_{\lambda_1,\lambda_2}(f)
 := \Delta H + H|A^o|^2 - 2\lambda_1H - 2\lambda_2
 = 0.
\label{EQglW}
\end{equation}
Clearly the effects of $\lambda_1$ and $\lambda_2$ on the solution $f$ are local in nature.
For this reason we name $\SW_{\lambda_1,\lambda_2}$ the \emph{locally constrained Willmore functional}.
From a physical perspective, our simplification corresponds with an assumption that the fluid surrounding the membrane
$f(\Sigma)$ and the fluid contained inside the membrane $f(\Sigma)$ induce zero spontaneous curvature in $f(\Sigma)$.
Thus solutions of \eqref{EQglW} faithfully represent lipid bilayers in certain settings.

Heuristically, one expects the sign of $\lambda_1, \lambda_2$ to have a dramatic impact on the nature of the minimisers
of $\SW_{\lambda_1,\lambda_2}$.
If $\lambda_1$ is negative, the functional is unbounded from below.
Furthermore, as we assume $f$ is only immersed, one can not control the sign of the volume term.
This makes a direct minimisation procedure quite difficult to carry out.
Despite this, the presence of the Willmore term punishes self-intersections and other complicated branching behaviour.
One may thus hope to understand the structure of the family of immersed critical points with Willmore energy slightly
larger than that of an embedded round sphere.

Our main result is a direct classification of all smooth properly immersed solutions to the Euler-Lagrange equation
\eqref{EQglW} with Willmore energy close to that of the sphere.
This is more general than classifying the set of all immersed critical points of $\SW_{\lambda_1,\lambda_2}$ under the
energy condition, as we do not assume $f$ is \emph{closed}.
In this case the functional $\SW_{\lambda_1,\lambda_2}$ is not well-defined.
Indeed any reasonable definition of the functional for proper non-closed immersed surfaces $f$ would set
$\SW_{\lambda_1,\lambda_2}(f) = \infty$.
We may nevertheless work directly with the differential equation $\BW_{\lambda_1,\lambda_2}(f) = 0$.
In this manner one may speak of Helfrich surfaces with unbounded area, not well-defined volume, and so on.
The main motivation for considering this class of surfaces is that one expects proper non-closed immersed surfaces to
arise in the analysis of singularities, through a blowup procedure for example.
\begin{thm}
\label{Tgap}
Suppose $f:\Sigma\rightarrow\R^3$ is a smooth properly immersed surface.
If
\begin{equation}
\int_\Sigma |A^o|^2 d\mu < \frac1{2c_1c_2} := \varepsilon_1
\label{Egapass}
\end{equation}
where $c_1$, $c_2$ are the absolute constants defined in Corollary \ref{CYgapcor} and Proposition \ref{Pgapest}
respectively,
the following statements hold: ($\lambda_1 > 0$)
\begin{align*}
(\lambda_2 < 0)
&\hskip+6mm
 \BW_{\lambda_1,\lambda_2}(f) = 0
\text{ if and only if }
 f(\Sigma) = S_{-\frac{2\lambda_1}{\lambda_2}}(x)\text{ for some $x\in\R^3$},
\\
(\lambda_2 = 0)
&\hskip+6mm
 \BW_{\lambda_1,\lambda_2}(f) = 0
\text{ if and only if }
 f(\Sigma)\text{ is a plane},
\\
(\lambda_2 > 0)
&\hskip+6mm
 \BW_{\lambda_1,\lambda_2}(f) \ne 0.
\intertext{If $\lambda_1 = 0$ then}
(\lambda_2 = 0)
&\hskip+6mm
 \BW_{\lambda_1,\lambda_2}(f) = 0
\text{ if and only if }
 f(\Sigma)\text{ is a plane or a sphere},
\\
(\lambda_2 \ne 0)
&\hskip+6mm
 \BW_{\lambda_1,\lambda_2}(f) \ne 0.
\end{align*}
Here $S_\rho(x) = \partial B_\rho(x)$ denotes the sphere of radius $\rho$ centred at $x\in\R^3$.
\end{thm}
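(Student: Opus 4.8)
The plan is to show that the smallness hypothesis \eqref{Egapass} forces $f$ to be totally umbilic, to identify which properly immersed surfaces are totally umbilic, and then to substitute $A^o\equiv0$ into \eqref{EQglW} and read off the classification case by case. \emph{Step 1 (rigidity).} This is the substantive step, and the one where \eqref{Egapass} and the sign assumption $\lambda_1\ge0$ are used. The two estimates established earlier in the paper are built, respectively, from the Simons-type identity satisfied by $A^o$ along a solution of \eqref{EQglW}---in which the lower-order terms $-2\lambda_1H$ and $-2\lambda_2$ must be controlled and $\lambda_1\ge0$ keeps a sign favourable---and from the Michael--Simon Sobolev inequality, both localised against a cutoff $\gamma$ so that non-compact surfaces can be treated (properness making $\gamma$ compactly supported on $\Sigma$). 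Concatenating Proposition \ref{Pgapest} and Corollary \ref{CYgapcor} yields, for a solution of \eqref{EQglW},
\[
\int_\Sigma|A^o|^4\,\gamma^k\,d\mu\;\le\;c_1c_2\Big(\int_\Sigma|A^o|^2\,d\mu\Big)\int_\Sigma|A^o|^4\,\gamma^k\,d\mu\;+\;E_\gamma ,
\]
with $E_\gamma$ collecting the terms carrying $\nabla\gamma$. Since \eqref{Egapass} makes the coefficient $c_1c_2\int_\Sigma|A^o|^2\,d\mu$ strictly smaller than $\tfrac12$, letting the cutoff exhaust $\Sigma$---so that $E_\gamma\to0$, using properness together with the integrability already present in the two estimates---forces $\int_\Sigma|A^o|^4\,d\mu=0$, hence $A^o\equiv0$ on $\Sigma$.

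\emph{Step 2 (umbilic surfaces).} Once $A^o\equiv0$ we have $A_{ij}=\tfrac12Hg_{ij}$, so the Codazzi equations give $(\nabla_kH)g_{ij}=(\nabla_jH)g_{ik}$; tracing this forces $\nabla H\equiv0$ in dimension two, so $H$ is constant. Because $f$ is a proper immersion, $f(\Sigma)$ is closed in $\R^3$; being totally umbilic and connected, $f(\Sigma)$ lies on a round sphere (if $H\ne0$) or an affine plane (if $H=0$), and since $f$ is then a local diffeomorphism onto that sphere or plane, $f(\Sigma)$ is also open in it, hence by connectedness equals the whole sphere or plane. In the convention in force here a round sphere of radius $\rho$ has $H=2/\rho>0$ and a plane has $H\equiv0$.

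\emph{Step 3 (classification and converse).} Substituting $A^o\equiv0$ and $H$ constant into \eqref{EQglW} leaves the algebraic relation $\lambda_1H+\lambda_2=0$. If $\lambda_1>0$ this reads $H=-\lambda_2/\lambda_1$: when $\lambda_2<0$ we get $H>0$ and $f(\Sigma)=S_{-2\lambda_1/\lambda_2}(x)$; when $\lambda_2=0$ we get $H=0$ and $f(\Sigma)$ is a plane; when $\lambda_2>0$ we would need $H<0$, impossible for a plane or a round sphere, so \eqref{EQglW} admits no solution. If $\lambda_1=0$ the relation becomes $\lambda_2=0$, so there is no solution when $\lambda_2\ne0$, while for $\lambda_2=0$ equation \eqref{EQglW} is the Willmore equation, which every plane and every round sphere satisfies. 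The reverse implications are immediate: a plane has $H\equiv0$ and $A^o\equiv0$; the sphere $S_{-2\lambda_1/\lambda_2}(x)$ has $\Delta H=0$, $A^o\equiv0$ and $H\equiv-\lambda_2/\lambda_1$; in each case \eqref{EQglW} holds by inspection, and each listed surface satisfies \eqref{Egapass} trivially since $A^o\equiv0$.

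The main obstacle is Step 1: one must extract a clean $L^2$ estimate for $A^o$ from the fourth-order equation \eqref{EQglW} while preventing the lower-order term $-2\lambda_1H$ from destroying a sign (hence the restriction $\lambda_1\ge0$), and then run the whole argument on a possibly non-compact surface, which is precisely why the cutoff error terms $E_\gamma$ must be shown to vanish in the limit and why properly immersed surfaces, rather than closed ones, are the natural setting---this is also the origin of the explicit threshold $\varepsilon_1=1/(2c_1c_2)$. Steps 2 and 3 are then routine once $A^o\equiv0$ is in hand.
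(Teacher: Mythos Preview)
Your three-step outline matches the paper's structure, and Steps~2 and~3 are essentially correct. Step~1, however, contains a genuine gap.

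First, a misreading: Proposition~\ref{Pgapest} already \emph{uses} Corollary~\ref{CYgapcor} together with the Michael--Simon inequality \eqref{EQmssforAo}, and already performs the absorption via \eqref{Egapass}. Its conclusion \eqref{Egapest1}, specialised to $\BW_{\lambda_1,\lambda_2}(f)\equiv0$ and $\lambda_1\ge0$, reads
\[
\int_\Sigma\big(|\nabla_{(2)}A|^2+|A|^2|\nabla A|^2+|A|^4|A^o|^2\big)\gamma^4\,d\mu
\;\le\;(c_\gamma)^2\!\int_\Sigma|\nabla H|^2\gamma^2\,d\mu+c(c_\gamma)^4\vn{A^o}^2_{2,[\gamma>0]}.
\]
The controlled quantity is not $\int|A^o|^4\gamma^k$, and no self-referential term with coefficient $c_1c_2\int|A^o|^2$ remains on the right---that absorption has already been carried out inside the proof of Proposition~\ref{Pgapest}.

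Second, and more importantly: your claim that the cutoff error $E_\gamma\to0$ as the cutoff exhausts $\Sigma$ is not justified. Taking $c_\gamma\sim\rho^{-1}$, the term $c(c_\gamma)^4\vn{A^o}^2_2\le c\rho^{-4}\varepsilon_1$ indeed vanishes. But the term $(c_\gamma)^2\int_\Sigma|\nabla H|^2\gamma^2\,d\mu$ does \emph{not} obviously vanish: nothing in the hypotheses gives $\int_\Sigma|\nabla H|^2\,d\mu<\infty$, and appealing to ``the integrability already present in the two estimates'' is circular. The paper handles this in Section~4 by a further integration by parts,
\[
(c_\gamma)^2\!\int_\Sigma|\nabla H|^2\gamma^2\,d\mu
\;\le\;2\delta\!\int_\Sigma|\Delta A^o|^2\gamma^4\,d\mu+c(c_\gamma)^4\vn{A^o}^2_{2,[\gamma>0]},
\]
so that for small $\delta$ the $|\Delta A^o|^2$ term is absorbed back into the left-hand side of the previous display. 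Only after this absorption does letting $\rho\to\infty$ force $|\nabla_{(2)}A|^2+|A|^2|\nabla A|^2+|A|^4|A^o|^2\equiv0$ pointwise, whence $A^o\equiv0$. This additional estimate is the missing ingredient in your Step~1.
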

We note that the fourth statement above is an improvement of \cite[Theorem 2.7]{KS01} for the case where $n=3$.


\begin{rmk}
The catenoid $f_c:\Sigma\rightarrow\R^3$ is a properly immersed minimal surface with
\[
\int_\Sigma |A^o|^2 d\mu = 8\pi.
\]
Furthermore $\BW_{\lambda_1,0}(f_c) = 0$ for all $\lambda_1\in\R$ and so Theorem \ref{Tgap} holds at best for
$\varepsilon_1\le8\pi$.
\end{rmk}

The methods we use in this paper are inspired by some recent progress on the analysis of the Willmore functional
\cite{KS01,KS02,KS04} due to Kuwert and Sch\"atzle.
There are notable differences between the functional $\SW_{\lambda_1,\lambda_2}$ and the Willmore functional
$\SW$.
The set of all critical points contain the set of all minimisers of the functional, and the functional
$\SW_{\lambda_1,\lambda_2}$ is not bounded from below.
Despite $\lambda_1\ge0$, one is not able to control the sign of the volume term.
It is conceivable that an exotic critical point with enormous negative volume exists as the limit of a properly immersed
minimising sequence.

We prove here that the condition \eqref{Egapass} prevents any critical points from appearing which are not flat planes
or round spheres.
The operator $\BW_{\lambda_1,\lambda_2}$ does not admit a maximum principle, and thus we do not have access to the large
assortment of tools it brings.
We instead rely throughout the paper on estimates for curvature quantities on smooth immersed surfaces combined with the
divergence theorem and the Michael-Simon Sobolev inequality \cite{MS73}.
Using these we derive a series of local integral estimates, which when globalised, yield Theorem \ref{Tgap}.
This is similar to the idea used to prove \cite[Theorem 2.7]{KS01}.

It is notable that the theorem does not require any smallness of the parameters $\lambda_1$ and $\lambda_2$, and does
not require any smallness (or even boundedness) of $\vn{H}_p$ for any $p$.
Indeed, the surface $f$ may a priori possess quite wild behaviour at infinity.
This implies in particular that these critical points of the functional may have arbitrarily large or not even
well-defined Helfrich energy.
Existence results without any smallness condition at all are extremely difficult and only known in the class of
rotationally symmetric critical points for the Willmore functional \cite{DFGS11,DG09}.
There one finds such a plethora of solutions that the classification question is for the moment too difficult.

This paper is organised as follows.
In Section 2 we set up our notation and briefly derive the first variation of the functional
$\SH^{c_0}_{\lambda_1,\lambda_2}$.
In Section 3 we establish local integral estimates for the Euler-Lagrange operator $\BW_{\lambda_1,\lambda_2}$ in $L^2$.
In Section 4 we prove Theorem \ref{Tgap}.
Finally, we have included several proofs and derivations of known results in Appendix A for the convenience of the
reader.

The authors would each like to thank their home institutions for their support and their collaborator's home
institutions for their hospitality during respective visits.  Both authors would also like to thank Prof.
Graham Williams for useful discussions during the preparation of this work.


\section*{Acknowledgements}

The research of the first author was supported under the Australian Research Council's Discovery Projects scheme
(project number DP120100097).
The first author is also grateful for the support of the University of Wollongong Faculty of Informatics Research
Development Scheme grant.

Part of this work was carried out while the second author was a research associate supported by the Institute for
Mathematics and Its Applications at the University of Wollongong.
Part of this work was also carried out while the second author was a Humboldt research fellow at the Otto-von-Guericke
Universit\"at Magdeburg.  The support of the Alexander von Humboldt Stiftung is gratefully acknowledged.


\section{Preliminaries}

We consider a surface $\Sigma$ immersed in $\R^3$ via $f:\Sigma\rightarrow\R^3$ and endow a Riemanain metric on $\Sigma$
defined componentwise by
\begin{equation}
g_{ij} = \IP{\partial_if}{\partial_jf},
\label{EQmetric}
\end{equation}
where $\partial$ denotes the regular partial derivative and $\IP{\cdot}{\cdot}$ is the standard Euclidean inner product.
That is, we consider the Riemannian structure on $\Sigma$ induced by $f$, where in particular the metric $g$ is given by
the pull-back of the standard Euclidean metric along $f$.  Integration on $\Sigma$ is performed with respect to the
induced area element
\begin{equation}
d\mu = \sqrt{\text{det }g}\ d\SH^3,
\label{EQdmu}
\end{equation}
where $d\SH^3$ is the standard Hausdorff measure on $\R^3$.

The metric induces an inner product structure on all tensor fields defined over $\Sigma$, where corresponding pairs of
indices are contracted.  For example, if $T$ and $S$ are $(1,2)$ tensor fields,
\[
\IP{T}{S}_g = g_{ip}g^{jq}g^{kr}T^{i}_{jk}S^p_{qr},\qquad |T|^2 = \IP{T}{T}_g.
\]
In the above, and in what follows, we shall use the summation convention on repeated indices unless otherwise explicitly
stated.

The \emph{second fundamental form} $A$ is a symmetric $(0,2)$ tensor field over $\Sigma$ with components
\begin{equation}
A_{ij} = \IP{\partial^2_{ij}f}{\nu},
\label{EQsff}
\end{equation}
where $\nu$ is an inward pointing unit vector field normal along $f$.
With this choice one finds that the second fundamental form of the standard round sphere embedded in $\R^3$ is positive.
There are two invariants of $A$ relevant to our work here: the first is the trace with respect to the metric
\[
H = \text{trace}_g\ A = g^{ij}A_{ij}
\]
called the \emph{mean curvature}, and the second the determinant with respect to the metric, called the \emph{Gauss
curvature},
\[
K = \text{det}_g\ A
 = \text{det }\big(g^{ik}A_{kj}\big),
\]
where $\big(P^{ik}Q_{kj}\big)$ is used above to denote the matrix with $i,j$-th component equal to $P^{ik}Q_{kj}$.

The mean and Gauss curvatures are easily expressed in terms of the principal curvatures: at a
single point we may make a local choice of frame for the tangent bundle $T\Sigma$ under which the eigenvalues of $A$
appear along its diagonal.  These are denoted by $k_1$, $k_2$ and are called the \emph{principal curvatures}.  We then
have
\[
H = k_1+k_2,\qquad K = k_1k_2.
\]
We shall often decompose the second fundamental form into its trace and its tracefree parts,
\[
A = A^o + \frac{1}{2}gH,
\]
where $(0,2)$ tensor field $A^o$ is called the \emph{tracefree second fundamental form}.  In a basis which diagonalises
$A$, a so-called principal curvature basis, its norm is given by
\[
|A^o|^2 = \frac{1}{2}(k_1-k_2)^2.
\]
The Christoffel symbols of the induced connection are determined by the metric,
\begin{equation*}
\Gamma_{ij}^k = \frac{1}{2}g^{kl}
                \left(\partial_ig_{jl} + \partial_jg_{il} - \partial_lg_{ij}\right),
\end{equation*}
so that then the covariant derivative on $\Sigma$ of a vector $X$ and of a covector $Y$ is
\begin{align*}
\nabla_jX^i &= \partial_jX^i + \Gamma^i_{jk}X^k\text{, and}\\
\nabla_jY_i &= \partial_jY_i - \Gamma^k_{ij}Y_k
\end{align*}
respectively.

From \eqref{EQsff} and the smoothness of $f$ we see that the second fundamental form is
symmetric; less obvious but equally important is the symmetry of the first covariant derivatives of
$A$, 
\[ \nabla_iA_{jk} = \nabla_jA_{ik} = \nabla_kA_{ij}, \]
commonly referred to as the Codazzi equations.

One basic consequence of the Codazzi equations which we shall make use of is that the gradient of the mean curvature is
completely controlled by a contraction of the $(0,3)$ tensor $\nabla A^o$.  To see this, first note that
\[
\nabla_i A^i_j = \nabla_i H = \nabla_i \Big( (A^o)^i_j + \frac12 g_j^i H\Big),
\]
then factorise to find
\begin{equation}
\label{EQbasicgradHgradAo}
\nabla_j H = 2\nabla_i (A^o)^i_j =: 2(\nabla^* A^o)_j.
\end{equation}
This in fact shows that all derivatives of $A$ are controlled by derivatives of $A^o$.
For a $(p,q)$ tensor field $T$, let us denote by $\nabla_{(n)}T$ the tensor field with components
 $\nabla_{i_1\ldots i_n}T_{j_1\ldots j_q}^{k_1\ldots k_p} =
  \nabla_{i_1}\cdots\nabla_{i_n}T_{j_1\ldots j_q}^{k_1\ldots k_p}$.
In our notation, the $i_n$-th covariant derivative is applied first.
Since
\[
\nabla_{(k)} A = \Big(\nabla_{(k)} A^o + \frac12 g \nabla_{(k)} H\Big)
               = \Big(\nabla_{(k)} A^o +         g \nabla_{(k-1)}\nabla^*A^o\Big),
\]
we have
\begin{equation}
\label{EQbasicgradHgradAo2}
|\nabla_{(k)}A|^2 \le 3|\nabla_{(k)}A^o|^2.
\end{equation}
The fundamental relations between components of the Riemann curvature tensor $R_{ijkl}$, the Ricci
tensor $R_{ij}$ and scalar curvature $R$ are given by Gauss' equation
\begin{align*}
R_{ijkl} &= A_{ik}A_{jl} - A_{il}A_{jk},\intertext{with contractions}
g^{jl}R_{ijkl}
   = R_{ik} 
  &= HA_{ik} - A_i^jA_j^k\text{, and}\\
g^{ik}R_{ik}
   = R
  &= |H|^2 - |A|^2.
\end{align*}
We will need to interchange covariant derivatives; for vectors $X$ and covectors $Y$ we obtain
\begin{align}
\notag
\nabla_{ij}X^h - \nabla_{ji}X^h &= R^h_{ijk}X^k = (A_{lj}A_{ik}-A_{lk}A_{ij})g^{hl}X^k,\\
\label{EQinterchangespecific}
\nabla_{ij}Y_k - \nabla_{ji}Y_k &= R_{ijkl}g^{lm}Y_m = (A_{lj}A_{ik}-A_{il}A_{jk})g^{lm}Y_m.
\end{align}
We also use for tensor fields $T$ and $S$ the notation $T*S$ (as in Hamilton \cite{H82}) to denote a linear combination
of new tensors, each formed by contracting pairs of indices from $T$ and $S$ by the metric $g$ with multiplication by a
universal constant.  The resultant tensor will have the same type as the other quantities in the expression it appears.
As is common for the $*$-notation, we slightly abuse this constant when certain subterms do not appear in our $P$-style
terms. For example
\begin{align*}
|\nabla A|^2
    = \IP{\nabla A}{\nabla A}_g
    = 1\cdot\left(\nabla_{(1)}A*\nabla_{(1)}A\right) + 0\cdot\left(A*\nabla_{(2)}A\right)
    = P_2^2(A).
\end{align*}

The Laplacian we will use is the Laplace-Beltrami operator on $\Sigma$, with the components of $\Delta T$
given by
\[
\Delta T_{j_1\ldots j_q}^{k_1\ldots k_p} = g^{pq}\nabla_{pq}T_{j_1\ldots j_q}^{k_1\ldots k_p}  = \nabla^p\nabla_pT_{j_1\ldots j_q}^{k_1\ldots k_p}.
\]
Using the Codazzi equation with the interchange of covariant derivative formula given above, we
obtain Simons' identity:
\begin{align}
g^{kl}\nabla_{ki} A_{lj} &= g^{kl}\nabla_{ik}A_{lj} + g^{kl}g^{pq}R_{kilp}A_{qj} + g^{kl}g^{pq}R_{kijp}A_{lq}
 \notag\\
\Delta A_{ij} &= \nabla_{ik}A_{j}^k
 + g^{kl}g^{pq} (A_{pi}A_{kl}-A_{kp}A_{il}) A_{qj}
\notag\\&\quad
 + g^{kl}g^{pq} (A_{pi}A_{kj}-A_{kp}A_{ij}) A_{lq}
 \notag\\
              &= \nabla_{ik}A_{j}^k
 + Hg^{pq}A_{pi}A_{qj}
 - g^{kl}g^{pq} A_{kp}A_{il} A_{qj}
\notag\\&\quad
 + g^{kl}g^{pq} A_{pi}A_{kj} A_{lq}
 - A_{ij}\IP{A}{A}_g
 \notag\\
\label{EQsi}  &= \nabla_{ij}H + HA_{i}^kA_{kj} - |A|^2A_{ij},
\end{align}
or in $*$-notation
\[
\Delta A = \nabla_{(2)}H + A*A*A.             
\]
The interchange of covariant derivatives formula for mixed tensor fields $T$ is simple to state in $*$-notation:
\begin{equation}
\label{EQinterchangegeneral}
\nabla_{ij}T = \nabla_{ji}T + T*A*A.
\end{equation}

We now briefly compute the first variation of the Helfrich functional.

\begin{lem}
Suppose $f:\Sigma\rightarrow\R^3$ is a closed immersed surface and $\phi:\Sigma\rightarrow \R^3$ is a vector field
normal along $f$.  Then
\begin{align*}
\rD{}{t}&\SH^{c_0}_{\lambda_1,\lambda_2}(f+t\,\phi)\bigg|_{t=0}
\\
 &= \frac{1}{2}\int_\Sigma \IP{\phi}{\nu}(\Delta H + H|A^o|^2+2c_0K-(2\lambda_1+c_0^2/2)H-2\lambda_2) d\mu.
\end{align*}
In particular, if 
\[
 \BH^{c_0}_{\lambda_1,\lambda_2}(f)
 := -\big(\Delta H + H|A^o|^2+2c_0K-(2\lambda_1+c_0^2/2)H-2\lambda_2\big)
  = 0
\]
then
\[
\rD{}{t}\SH^{c_0}_{\lambda_1,\lambda_2}(f+t\phi)\bigg|_{t=0}
 = 0
\]
and $f$ is a critical point of $\SH^{c_0}_{\lambda_1,\lambda_2}$.
\label{LMeveq}
\end{lem}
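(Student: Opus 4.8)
The plan is to reduce the computation to three standard normal-variation formulas and then assemble them. Write $\psi := \IP{\phi}{\nu}$ for the normal speed of the variation. I would use: (i) $\pD{}{t}(d\mu)\big|_{t=0} = -H\psi\,d\mu$; (ii) $\pD{}{t}H\big|_{t=0} = \Delta\psi + |A|^2\psi$; and (iii) $\rD{}{t}\mathrm{Vol}(f+t\phi)\big|_{t=0} = -\int_\Sigma\psi\,d\mu$. The signs reflect the convention of Section 2 that $\nu$ is the \emph{inward} unit normal (so the round sphere has $H>0$); testing each formula on a round sphere $S_r$ --- where a constant $\psi>0$ shrinks the radius --- pins down all three signs at once. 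For (iii), since $f$ is only immersed, I would justify the formula via the pull-back $f^*(d\SH^3)$ exactly as in \cite[Lemma 2.1]{BdCE88}. Formula (ii) may be quoted as classical or derived from Simons' identity \eqref{EQsi} together with the interchange formula \eqref{EQinterchangegeneral}.

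With these in hand, the lower-order terms are immediate: $\rD{}{t}\big(\lambda_1\mu(\Sigma)+\lambda_2\mathrm{Vol}\big)\big|_{t=0} = \int_\Sigma\psi(-\lambda_1 H - \lambda_2)\,d\mu$ by (i) and (iii), which already accounts for the $-2\lambda_1 H - 2\lambda_2$ terms up to the overall factor $\tfrac12$.

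For the curvature term I would expand $(H-c_0)^2 = H^2 - 2c_0 H + c_0^2$ and differentiate piece by piece. The $\tfrac14\int_\Sigma H^2\,d\mu$ piece is the classical Willmore first variation: by (i) and (ii), $\rD{}{t}\tfrac14\int_\Sigma H^2 d\mu = \tfrac14\int_\Sigma\big(2H(\Delta\psi+|A|^2\psi) - H^3\psi\big)d\mu$, and integrating $\int 2H\Delta\psi = \int 2\psi\Delta H$ by parts (no boundary terms, as $\Sigma$ is closed) gives $\tfrac12\int_\Sigma\psi\big(\Delta H + H|A|^2 - \tfrac12 H^3\big)d\mu = \tfrac12\int_\Sigma\psi(\Delta H + H|A^o|^2)\,d\mu$, using the pointwise identity $|A^o|^2 = |A|^2 - \tfrac12 H^2$ (equivalently $|A^o|^2 = \tfrac12(k_1-k_2)^2$). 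For the cross term, $\rD{}{t}\big(-\tfrac{c_0}{2}\int_\Sigma H\,d\mu\big) = -\tfrac{c_0}{2}\int_\Sigma\big(\Delta\psi + |A|^2\psi - H^2\psi\big)d\mu = c_0\int_\Sigma\psi K\,d\mu$, using $\int_\Sigma\Delta\psi\,d\mu = 0$ and $|A|^2 - H^2 = -2K$. Finally $\rD{}{t}\big(\tfrac{c_0^2}{4}\mu(\Sigma)\big) = -\tfrac{c_0^2}{4}\int_\Sigma\psi H\,d\mu$. Collecting all contributions yields precisely $\tfrac12\int_\Sigma\IP{\phi}{\nu}\big(\Delta H + H|A^o|^2 + 2c_0 K - (2\lambda_1 + c_0^2/2)H - 2\lambda_2\big)d\mu$, and the ``in particular'' clause follows at once: if $\BH^{c_0}_{\lambda_1,\lambda_2}(f)=0$ the integrand vanishes identically.

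I expect the only genuine obstacle to be the bookkeeping: keeping the inward-normal sign convention consistent through (i)--(iii) and through each integration by parts, and supplying (or cleanly citing) the normal variation of $H$ in (ii). Once the three variation formulas are fixed with the correct signs, the remainder is just the two algebraic identities $|A^o|^2 = |A|^2 - \tfrac12 H^2$ and $|A|^2 - H^2 = -2K$ together with collecting terms.
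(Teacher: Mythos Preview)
Your proposal is correct and follows essentially the same route as the paper: both set $\psi=\IP{\phi}{\nu}$, invoke the three standard normal-variation formulas for $d\mu$, $H$, and $\mathrm{Vol}$, expand $(H-c_0)^2$ term by term, integrate by parts, and collect using $|A^o|^2=|A|^2-\tfrac12 H^2$ and $|A|^2-H^2=-2K$. The only cosmetic difference is that the paper cites \cite{E04} for (i) and (ii) rather than Simons' identity, and states the three intermediate variations $\rD{}{t}\int d\mu$, $\rD{}{t}\int H\,d\mu$, $\rD{}{t}\int H^2\,d\mu$ before assembling; your version spells out slightly more of the algebra, but the argument is the same.
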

\begin{proof}
For the sake of brevity we shall suppress the dependence of various quantities on $(f+t\,\phi)$.
All derivatives with respect to $t$ will be implicitly evaluated at $t=0$.
Let us assume $\phi = \varphi\,\nu$ for some function $\varphi:\Sigma\rightarrow\R$.
From the formulae \eqref{EQmetric}, \eqref{EQsff} we compute the variation of the mean curvature as
\[
\pD{}{t} H = g^{ij}\pD{}{t}A_{ij} + A_{ij}\pD{}{t}g^{ij}
             = \Delta H - \phi|A|^2 + 2\phi|A|^2
             = \Delta \varphi + \varphi |A|^2,
\]
while for the induced area element differentiating the determinant in \eqref{EQdmu} gives
\[
\rD{}{t} d\mu = -\varphi H\, d\mu.
\]
Each of these formulae are proven in \cite[Appendix A]{E04}.
Using these and the divergence theorem we compute
\[
\rD{}{t}\int_\Sigma d\mu
 = -\int_\Sigma \varphi H\, d\mu,
\]
\[
\rD{}{t}\int_\Sigma H\, d\mu
 = -2\int_\Sigma \varphi K\, d\mu,
\]
and
\[
\frac{1}{2}\rD{}{t}\int_\Sigma |H|^2 d\mu
 = \int_\Sigma \varphi (\Delta H + H|A^o|^2)\, d\mu.
\]
The variation of the enclosed volume may be computed as in \cite{BdCE88}.
For the convenience of the reader we outline the argument here.
Let $p\in\Sigma$.
Suppose $\{e_1,e_2,n\}$ is a positively oriented adapted orthonormal frame around $f(p)$.
Then
\[
f^*(d\SH^3)
 = a(t,p)\, dt \wedge d\mu
\]
where
\begin{align*}
a(t,p) 
 &= f^*(d\SH^3)\Big(\pD{}{t},e_1,e_2\Big)
  = d\SH^3\Big(\pD{f}{t},\pD{f}{e_1},\pD{f}{e_2}\Big)
\\
 &= \text{Vol}^{\R^3}\Big(\pD{f}{t},\pD{f}{e_1},\pD{f}{e_2}\Big)
  = \IP{\phi}{n}.
\end{align*}
We thus have
\begin{align*}
\rD{}{t}\text{Vol $\Sigma$}\bigg|_{t=0}
 &= \rD{}{t}\int_{\Sigma} a(t,p)\, dt\wedge d\mu\bigg|_{t=0}
\\
 &= \int_{\Sigma} a(0,p)\, d\mu
  = -\int_\Sigma \IP{\phi}{\nu}\, d\mu
  = -\int_\Sigma \varphi\, d\mu.
\end{align*}
Putting these together we have
\begin{align*}
\rD{}{t}
       \bigg(\frac{1}{4}&\int_\Sigma (H-c_0)^2d\mu + \lambda_1\mu(\Sigma) + \lambda_2\text{Vol
$\Sigma$}\bigg)\bigg|_{t=0}
\\
 &= \frac{1}{2}\int_\Sigma \varphi (\Delta H + H|A^o|^2)\, d\mu
    + \frac{c_0}{2}\int_\Sigma 2\varphi K\, d\mu
\\
 &\hskip+8mm
    - \big(\lambda_1+c_0^2/4\big)\int_\Sigma \varphi H\, d\mu
    - \lambda_2 \int_\Sigma \varphi\, d\mu
\\
 &= \frac{1}{2}\int_\Sigma \varphi\big(\Delta H + H|A^o|^2+2c_0K-(2\lambda_1+c_0^2/2)H-2\lambda_2\big)\, d\mu,
\end{align*}
which is the first statement of the lemma.
The remaining statement clearly follows from the first.
\end{proof}


\section{Control of geometry by the Euler-Lagrange operator}

In this section we prove estimates which give us control of the geometry of a surface in terms of
the Euler-Lagrange operator $\BW_{\lambda_1,\lambda_2}(f)$ and some error terms.
The geometric quantity which we aim to control is
\begin{equation*}
|\nabla_{(2)}A|^2 + |\nabla A|^2|A|^2 + |A|^4|A^o|^2.
\end{equation*}
Our control is gained through the use of the divergence theorem and multiplicative Sobolev
inequalities, as in \cite{KS01,W10sd}.
The key difference here is that the Euler-Lagrange operator is much more complicated and we must deal with several
extra terms.  Through the usage of a novel test function we have managed to exploit this extra complexity (in the form
of a lack of scale invariance of the functional).

\begin{defn}
Set $\gamma=\tilde{\gamma}\circ f:\Sigma\rightarrow[0,1]$,
$\tilde{\gamma} \in C^2_c(\R^3)$ satisfying
\begin{equation}
\vn{\nabla\gamma}_\infty \le c_{\gamma},\quad
\vn{\nabla_{(2)}\gamma}_\infty \le c_{\gamma}(c_\gamma+|A|),
\label{Egamma}
\tag{$\gamma$}
\end{equation}
for some absolute constant $c_{\gamma} < \infty$.
\end{defn}

\begin{rmk}
There is a smooth cutoff function $\tilde{\gamma}$  on $B_{2\rho}(y)\subset\R^3$, $y\in\R^3$, i.e.
\[
\tilde{\gamma}(x) = \begin{cases} 1 &\text{ if }|x-y|\le\rho \\
                                  0 &\text{ if }|x-y|\ge2\rho, 
\end{cases}
\quad\text{ and }\quad 0 \le \tilde{\gamma}(x) \le 1,
\]
such that $c_\gamma \le \frac{c}{\rho}$ for an absolute constant $c$ (see the appendix of \cite{W93}).
\end{rmk}

\begin{lem}
\label{LMgapest1}
Suppose $f:\Sigma\rightarrow\R^3$ is a smooth immersed surface and $\gamma$ is as in \eqref{Egamma}.  Then for $\delta>0$,
\begin{align*}
(1-2\delta)&\int_\Sigma |\Delta H|^2\gamma^4d\mu
   + 2\lambda_1\int_\Sigma |\nabla H|^2 \gamma^4d\mu
\\
&\le   \int_\Sigma \BW_{\lambda_1,\lambda_2}(f)\big(4\IP{\nabla H}{\nabla \gamma} + \gamma\Delta H\big) \gamma^3 d\mu
 +  \delta \int_\Sigma |H|^4|A^o|^2\gamma^4 d\mu
\\
&\quad
+ 8\frac{(c_\gamma)^2}{\delta}\int_\Sigma |\nabla H|^2 \gamma^2 d\mu
 + \frac{(4\delta^2+1)^2}{(4\delta)^3} \int_\Sigma |A^o|^6 \gamma^4d\mu.
\end{align*}
\end{lem}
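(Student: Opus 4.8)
The plan is to test the Euler--Lagrange operator against $\big(4\IP{\nabla H}{\nabla\gamma}+\gamma\Delta H\big)\gamma^3$, which is exactly $\Div(\gamma^4\nabla H)$, and then to apply Young's inequality several times. Writing $\BW_{\lambda_1,\lambda_2}(f)=\Delta H+H|A^o|^2-2\lambda_1H-2\lambda_2$, I would expand
\[
\int_\Sigma\BW_{\lambda_1,\lambda_2}(f)\,\Div(\gamma^4\nabla H)\,d\mu
\]
and integrate by parts term by term (legitimate since $\gamma=\tilde\gamma\circ f$ cuts the integrand off to a compact set). The virtue of this particular test function is that the $-2\lambda_2$ piece integrates to zero; the $-2\lambda_1H$ piece gives $-2\lambda_1\int_\Sigma H\,\Div(\gamma^4\nabla H)\,d\mu=2\lambda_1\int_\Sigma|\nabla H|^2\gamma^4\,d\mu$ with no residual $\nabla\gamma$ terms; and the $\Delta H$ piece gives $\int_\Sigma|\Delta H|^2\gamma^4\,d\mu+4\int_\Sigma\gamma^3\Delta H\IP{\nabla H}{\nabla\gamma}\,d\mu$. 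Leaving the $H|A^o|^2$ piece un-integrated, this yields the identity
\[
\int_\Sigma|\Delta H|^2\gamma^4\,d\mu+2\lambda_1\int_\Sigma|\nabla H|^2\gamma^4\,d\mu=\int_\Sigma\BW_{\lambda_1,\lambda_2}(f)\,\Div(\gamma^4\nabla H)\,d\mu-E_1-E_2-E_3,
\]
where $E_1=4\int_\Sigma\gamma^3\Delta H\IP{\nabla H}{\nabla\gamma}\,d\mu$, $E_2=\int_\Sigma H|A^o|^2\gamma^4\Delta H\,d\mu$, and $E_3=4\int_\Sigma H|A^o|^2\gamma^3\IP{\nabla H}{\nabla\gamma}\,d\mu$.

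It then remains to bound $|E_1|,|E_2|,|E_3|$ from above using Cauchy--Schwarz together with $\vn{\nabla\gamma}_\infty\le c_\gamma$ and Young's inequality. I would choose the weights so that $E_1$ and $E_2$ each contribute $\delta\int_\Sigma|\Delta H|^2\gamma^4\,d\mu$ (exhausting the $2\delta$ budget), so that $E_1$ and $E_3$ each contribute $\tfrac{4c_\gamma^2}{\delta}\int_\Sigma|\nabla H|^2\gamma^2\,d\mu$ (summing to $\tfrac{8c_\gamma^2}{\delta}$), while $E_2$ leaves $\tfrac1{4\delta}\int_\Sigma|H|^2|A^o|^4\gamma^4\,d\mu$ and $E_3$ leaves $\delta\int_\Sigma|H|^2|A^o|^4\gamma^4\,d\mu$; since $\tfrac1{4\delta}+\delta=\tfrac{4\delta^2+1}{4\delta}$ these combine into $\tfrac{4\delta^2+1}{4\delta}\int_\Sigma|H|^2|A^o|^4\gamma^4\,d\mu$. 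A final application of Young's inequality to $|H|^2|A^o|^4=(|H|^2|A^o|)(|A^o|^3)$, with the weight chosen so that the coefficient of $\int_\Sigma|H|^4|A^o|^2\gamma^4\,d\mu$ equals $\delta$, then forces the coefficient of $\int_\Sigma|A^o|^6\gamma^4\,d\mu$ to be exactly $\tfrac{(4\delta^2+1)^2}{(4\delta)^3}$. Substituting these bounds into the identity above and moving $2\delta\int_\Sigma|\Delta H|^2\gamma^4\,d\mu$ to the left-hand side yields the claimed inequality.

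The computations are routine once the structure is fixed; the only genuine choices are (a) using $\Div(\gamma^4\nabla H)$ rather than $\gamma^4\Delta H$ as the test function, which is what makes the $\lambda_1$- and $\lambda_2$-dependent contributions collapse cleanly and puts $2\lambda_1\int_\Sigma|\nabla H|^2\gamma^4\,d\mu$ on the favourable side, and (b) \emph{not} integrating the $H|A^o|^2$ term by parts --- doing so would introduce $\nabla(|A^o|^2)$ terms that do not close against the target quantity, whereas keeping it and applying Young directly is precisely what produces the sixth-order terms $|H|^4|A^o|^2$ and $|A^o|^6$. The one thing to watch is the bookkeeping of the Young weights, so that the constants $1-2\delta$, $\tfrac{8c_\gamma^2}{\delta}$, $\delta$, and $\tfrac{(4\delta^2+1)^2}{(4\delta)^3}$ all emerge exactly as written.
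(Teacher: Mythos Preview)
Your proof is correct and is essentially the paper's argument, repackaged more transparently: the paper tests separately against $\Delta H\,\gamma^4$ and against $\IP{\nabla H}{\nabla\gamma}\gamma^3$ (obtaining an intermediate estimate \eqref{EQgle1}) and then combines the two with appropriate weights, whereas you recognise from the outset that the combined test function equals $\Div(\gamma^4\nabla H)$, which makes the vanishing of the $\lambda_2$-contribution and the clean $2\lambda_1\int_\Sigma|\nabla H|^2\gamma^4\,d\mu$ term immediate. The Young-inequality bookkeeping and the final splitting of $|H|^2|A^o|^4$ into $|H|^4|A^o|^2$ and $|A^o|^6$ are identical to the paper's.
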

\begin{proof}
Since
\begin{multline*}
- 2\lambda_1H\IP{\nabla H}{\nabla \gamma}\gamma^3 - 2\lambda_2\IP{\nabla H}{\nabla \gamma}\gamma^3
\\
=   \BW_{\lambda_1,\lambda_2}(f)\IP{\nabla H}{\nabla \gamma}\gamma^3 - \Delta H\IP{\nabla H}{\nabla \gamma}\gamma^3 - H|A^o|^2\IP{\nabla H}{\nabla \gamma}\gamma^3
\end{multline*}
we have for $\delta>0$
\begin{align}
- 2\lambda_1&\int_\Sigma H\IP{\nabla H}{\nabla \gamma}\gamma^3d\mu - 2\lambda_2\int_\Sigma\IP{\nabla H}{\nabla
  \gamma}\gamma^3d\mu
\notag
\\
\label{EQgle1}
&\le   \int_\Sigma \BW_{\lambda_1,\lambda_2}(f)\IP{\nabla H}{\nabla \gamma}\gamma^3d\mu
 + \delta \int_\Sigma |\Delta H|^2\gamma^4d\mu
\\
\notag
&\quad + \delta \int_\Sigma |H|^2|A^o|^4\gamma^4 d\mu
 + \frac{(c_\gamma)^2}{2\delta}\int_\Sigma |\nabla H|^2 \gamma^2 d\mu.
\end{align}
Multiplying \eqref{EQglW} by $\Delta H\,\gamma^4$ gives
\begin{equation*}
|\Delta H|^2\gamma^4 
 = \BW_{\lambda_1,\lambda_2}(f)\Delta H \gamma^4 - H|A^o|^2\Delta H \gamma^4 + 2\lambda_1H \Delta H \gamma^4
 + 2\lambda_2\Delta H \gamma^4,
\end{equation*}
which combined with the divergence theorem yields
\begin{align}
\notag
\int_\Sigma |\Delta H|^2\gamma^4d\mu
&=   \int_\Sigma \BW_{\lambda_1,\lambda_2}(f)\Delta H \gamma^4d\mu
   - \int_\Sigma H|A^o|^2\Delta H \gamma^4d\mu
\\
\notag
&\quad
   + 2\lambda_1\int_\Sigma H \Delta H \gamma^4d\mu
   + 2\lambda_2\int_\Sigma \Delta H \gamma^4d\mu
\\
\notag
&=   \int_\Sigma \BW_{\lambda_1,\lambda_2}(f)\Delta H \gamma^4d\mu
   - \int_\Sigma H|A^o|^2\Delta H \gamma^4d\mu
\\
\notag
&\quad
   - 2\lambda_1\int_\Sigma |\nabla H|^2 \gamma^4d\mu
   - 8\lambda_1\int_\Sigma H\IP{\nabla H}{\nabla \gamma} \gamma^3 d\mu
\\
\notag
&\quad
   - 8\lambda_2\int_\Sigma \IP{\nabla H}{\nabla \gamma} \gamma^3 d\mu.
\end{align}
Using \eqref{EQgle1} to estimate the inner product terms,
\begin{align*}
(1-2\delta)&\int_\Sigma |\Delta H|^2\gamma^4d\mu
   + 2\lambda_1\int_\Sigma |\nabla H|^2 \gamma^4d\mu
\\
&\le   \int_\Sigma \BW_{\lambda_1,\lambda_2}(f)\big(4\IP{\nabla H}{\nabla \gamma} + \gamma\Delta H\big) \gamma^3 d\mu
 +  \delta \int_\Sigma |H|^2|A^o|^4\gamma^4 d\mu
\\
&\quad
 + 8\frac{(c_\gamma)^2}{\delta}\int_\Sigma |\nabla H|^2 \gamma^2 d\mu
 +  \frac{1}{4\delta} \int_\Sigma |H|^2|A^o|^4 \gamma^4d\mu
\\
&\le   \int_\Sigma \BW_{\lambda_1,\lambda_2}(f)\big(4\IP{\nabla H}{\nabla \gamma} + \gamma\Delta H\big) \gamma^3 d\mu
 +  \delta \int_\Sigma |H|^4|A^o|^2\gamma^4 d\mu
\\
&\quad
 + 8\frac{(c_\gamma)^2}{\delta}\int_\Sigma |\nabla H|^2 \gamma^2 d\mu
 + \frac{(4\delta^2+1)^2}{(4\delta)^3} \int_\Sigma |A^o|^6 \gamma^4d\mu.
\end{align*}
This proves the lemma.
\end{proof}

\begin{lem}
Suppose $f:\Sigma\rightarrow\R^3$ is a smooth immersed surface and $\gamma$ is as in \eqref{Egamma}.  Then
\begin{align*}
&\int_\Sigma \big(|\nabla_{(2)}H|^2
                + |H|^2|\nabla H|^2
                + |H|^4|A^o|^2\big)\gamma^4d\mu
   + 2\lambda_1\int_\Sigma |\nabla H|^2 \gamma^4d\mu
\\
&\le 
    c\int_\Sigma \BW_{\lambda_1,\lambda_2}(f)\big(4\IP{\nabla H}{\nabla \gamma} + \gamma\Delta H\big) \gamma^3 d\mu
  + c(c_\gamma)^2\int_\Sigma |\nabla H|^2 \gamma^2d\mu
\\
&\quad
  + c\int_\Sigma\big(|A^o|^6 + |A^o|^2|\nabla A^o|^2\big)\gamma^4d\mu
  + c(c_\gamma)^4\int_{[\gamma>0]}|A^o|^2d\mu,
\end{align*}
where $c$ is an absolute constant.
\label{LMgapest2}
\end{lem}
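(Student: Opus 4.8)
The plan is to bootstrap from Lemma~\ref{LMgapest1}. Fixing $\delta>0$ small (to be pinned down below in terms of the absolute constants that appear), Lemma~\ref{LMgapest1} already gives control of $\int_\Sigma|\Delta H|^2\gamma^4+2\lambda_1\int_\Sigma|\nabla H|^2\gamma^4$ up to the term $\delta\int_\Sigma|H|^4|A^o|^2\gamma^4$, the admissible terms $\int_\Sigma|A^o|^6\gamma^4$ and $(c_\gamma)^2\int_\Sigma|\nabla H|^2\gamma^2$, and the Euler--Lagrange term $\int_\Sigma\BW_{\lambda_1,\lambda_2}(f)(4\IP{\nabla H}{\nabla\gamma}+\gamma\Delta H)\gamma^3$. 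So three things remain: upgrade $|\Delta H|^2$ to $|\nabla_{(2)}H|^2$ on the left, manufacture $|H|^2|\nabla H|^2$ and $|H|^4|A^o|^2$ on the left, and arrange that the copy of $\delta\int_\Sigma|H|^4|A^o|^2\gamma^4$ coming from Lemma~\ref{LMgapest1} is reabsorbed.

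For the first point, integrating by parts twice and commuting covariant derivatives via \eqref{EQinterchangegeneral} gives $\int_\Sigma|\nabla_{(2)}H|^2\gamma^4=\int_\Sigma|\Delta H|^2\gamma^4+\int_\Sigma\nabla H*A*A*\nabla H\,\gamma^4+(\text{terms in }\nabla\gamma)$. The $\nabla\gamma$-terms absorb by Young's inequality into $\epsilon\big(\int_\Sigma|\Delta H|^2\gamma^4+\int_\Sigma|\nabla_{(2)}H|^2\gamma^4\big)+c\epsilon^{-1}(c_\gamma)^2\int_\Sigma|\nabla H|^2\gamma^2$, and $\int_\Sigma\nabla H*A*A*\nabla H\,\gamma^4\le c\int_\Sigma|A|^2|\nabla H|^2\gamma^4$; writing $|A|^2=|A^o|^2+\tfrac12|H|^2$ and using $|\nabla H|^2\le4|\nabla A^o|^2$ from \eqref{EQbasicgradHgradAo}, the tracefree part becomes the admissible $c\int_\Sigma|A^o|^2|\nabla A^o|^2\gamma^4$ and what is left is a multiple of $\int_\Sigma|H|^2|\nabla H|^2\gamma^4$.

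For the second point I would test the Euler--Lagrange relation $\Delta H=\BW_{\lambda_1,\lambda_2}(f)-H|A^o|^2+2\lambda_1H+2\lambda_2$ against a suitable power of $H$ weighted by $\gamma^4$; this is the novel test function that uses the loss of scale invariance. After one integration by parts the $\Delta H$-term produces a positive multiple of $\int_\Sigma|H|^2|\nabla H|^2\gamma^4$ plus cutoff terms of the form $\int_\Sigma H^3\gamma^3\IP{\nabla H}{\nabla\gamma}$, the $H|A^o|^2$-term produces $\int_\Sigma|H|^4|A^o|^2\gamma^4$, the $\lambda_1$-term has a definite sign (favourable when $\lambda_1\ge0$) and the $\lambda_2$-term is re-expanded via the same relation; the $\BW_{\lambda_1,\lambda_2}(f)$-term is recast, after a further integration by parts, into the stated Euler--Lagrange term. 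Combining this identity with the estimate of Lemma~\ref{LMgapest1}, fixing $\delta$ small enough that the positively generated $\int_\Sigma|H|^4|A^o|^2\gamma^4$ dominates its $\delta$-multiple, passing between powers of $|A^o|$ by Young (e.g.\ $|H|^2|A^o|^4\le\tfrac12|H|^4|A^o|^2+\tfrac12|A^o|^6$), and estimating all $\nabla\gamma,\nabla_{(2)}\gamma$ errors through \eqref{Egamma} --- the bound $|\nabla_{(2)}\gamma|\le c_\gamma(c_\gamma+|A|)$ being what brings in the $(c_\gamma)^4\int_{[\gamma>0]}|A^o|^2$ term --- one would obtain the claim after a last absorption.

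The integrations by parts and the Young bookkeeping are routine. The genuine obstacle, absent in the Willmore case, is that $|H|^4|A^o|^2$ does not appear on the left of any single test-function identity with a positive coefficient --- testing against a power of $H$ trades it for $\int_\Sigma|H|^2|\nabla H|^2\gamma^4$ together with $\lambda_1$-, $\lambda_2$- and $\BW_{\lambda_1,\lambda_2}(f)$-terms, and the analogous $|H|$-carrying quantities ($|H|^4\gamma^4$, $|H|^6\gamma^4$, $\int_\Sigma H^3\gamma^3\IP{\nabla H}{\nabla\gamma}$) have no obvious home in the admissible right-hand side. Making everything close requires choosing the test function and the order of absorption so that each such term is favourably signed, absorbed into $\int_\Sigma(|\nabla_{(2)}H|^2+|H|^2|\nabla H|^2+|H|^4|A^o|^2)\gamma^4$ with small coefficient, or cancelled against Lemma~\ref{LMgapest1}; this is exactly where the smallness of $\delta$ and the particular novel test function are needed.
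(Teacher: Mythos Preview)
Your outline has a genuine gap at the ``second point.'' Testing the Euler--Lagrange relation against any power $H^p\gamma^4$ produces terms $2\lambda_1\!\int_\Sigma H^{p+1}\gamma^4$, $2\lambda_2\!\int_\Sigma H^{p}\gamma^4$, and $\int_\Sigma\BW_{\lambda_1,\lambda_2}(f)\,H^p\gamma^4$. None of these can be matched with anything in the claimed right-hand side: the lemma does not assume $\lambda_1\ge0$ (so the $\lambda_1$-term has no sign), the bare powers $\int_\Sigma H^k\gamma^4$ carry no factor of $A^o$, $\nabla H$, or $c_\gamma$ and hence cannot be absorbed, and there is no integration by parts turning $\int_\Sigma\BW\,H^p\gamma^4$ into the specific form $\int_\Sigma\BW\,(4\IP{\nabla H}{\nabla\gamma}+\gamma\Delta H)\gamma^3$. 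You flag this difficulty yourself at the end but do not resolve it; as stated, the scheme cannot close.

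The paper avoids this entirely by never testing against powers of $H$ after Lemma~\ref{LMgapest1}. The two missing ingredients are purely geometric. First, for surfaces the commutator $[\nabla,\Delta]H$ has a \emph{definite} piece: from the Gauss equation one has $\nabla\Delta H=\Delta\nabla H-\tfrac14H^2\nabla H+A\!*\!A^o\!*\!\nabla H$, so integrating $\IP{\nabla\Delta H}{\nabla H}\gamma^4$ by parts yields the identity \eqref{EQgle2.5}, which puts \emph{both} $\int_\Sigma|\nabla_{(2)}H|^2\gamma^4$ and $\tfrac14\int_\Sigma|H|^2|\nabla H|^2\gamma^4$ on the left, with $\int_\Sigma|\Delta H|^2\gamma^4$ and only $A^o$-error terms on the right. (Your step~1 throws the $|H|^2|\nabla H|^2$ contribution onto the right as an error, forcing you into step~2.) Second, the $|H|^4|A^o|^2$ term comes from Simons' identity: testing $\Delta A^o=S^o(\nabla_{(2)}H)+\tfrac12H^2A^o-|A^o|^2A^o$ against $|H|^2A^o\gamma^4$ gives \eqref{EQgle4}, namely $\int_\Sigma|H|^2|\nabla A^o|^2\gamma^4+\tfrac27\int_\Sigma|H|^4|A^o|^2\gamma^4\le\int_\Sigma|H|^2|\nabla H|^2\gamma^4+(\text{admissible})$. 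Combining \eqref{EQgle2.5} with Lemma~\ref{LMgapest1} and then feeding in a small multiple of \eqref{EQgle4} (choosing $\delta_2=\tfrac17\delta_1$, $\delta_1<\tfrac1{16}$) gives the lemma; no $\lambda_i$- or $\BW$-terms are generated beyond those already packaged in Lemma~\ref{LMgapest1}.
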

\begin{proof}
Applying the divergence theorem to the interchange of covariant derivatives formula gives the following identity
\begin{multline}
\int_\Sigma |\nabla_{(2)}H|^2\gamma^4 d\mu + \frac14\int_\Sigma |H|^2|\nabla H|^2\gamma^4d\mu
=   \int_\Sigma |\Delta H|^2\gamma^4d\mu
\\
  + \int_\Sigma A*A^o*\nabla A^o*\nabla H\gamma^4d\mu
  + \int_\Sigma \nabla H*\nabla_{(2)}H * \nabla \gamma\ \gamma^3d\mu;
\label{EQgle2.5}
\end{multline}
which upon decomposing $A = A^o + \frac12gH$, using \eqref{EQbasicgradHgradAo} and estimating yields
\begin{multline*}
\frac12\int_\Sigma |\nabla_{(2)}H|^2\gamma^4 d\mu + \frac14\int_\Sigma |H|^2|\nabla H|^2\gamma^4d\mu
\le \int_\Sigma |\Delta H|^2\gamma^4d\mu
\\
  + \delta_1\int_\Sigma |H|^2|\nabla A^o|^2\gamma^4d\mu
  + c\int_\Sigma |A^o|^2|\nabla A^o|^2\gamma^4d\mu
  + c(c_\gamma)^2\int_\Sigma |\nabla H|^2 \gamma^2d\mu,
\end{multline*}
where $\delta_1 > 0$ and $c$ is a constant depending only on $\delta_1$.
For the convenience of the reader we provide a proof of \eqref{EQgle2.5} in Appendix A.
Combining this with Lemma \ref{LMgapest1} we have
\begin{align}
\Big(&\frac12-4\delta_2\Big)\int_\Sigma |\nabla_{(2)}H|^2\gamma^4 d\mu + \frac14\int_\Sigma |H|^2|\nabla H|^2\gamma^4d\mu
   + 2\lambda_1\int_\Sigma |\nabla H|^2 \gamma^4d\mu
\notag\\
&\le 
    \int_\Sigma \BW_{\lambda_1,\lambda_2}(f)\big(4\IP{\nabla H}{\nabla \gamma} + \gamma\Delta H\big) \gamma^3 d\mu
 +  \delta_2 \int_\Sigma |H|^4|A^o|^2\gamma^4 d\mu
\label{EQgle3}\\
&\quad
  + \delta_1\int_\Sigma |H|^2|\nabla A^o|^2\gamma^4d\mu
  + c\int_\Sigma\big(|A^o|^6 + |A^o|^2|\nabla A^o|^2\big)\gamma^4d\mu
\notag\\
&\quad
\notag + c(c_\gamma)^2\int_\Sigma |\nabla H|^2 \gamma^2d\mu,
\end{align}
where here and for the rest of the proof $c$ is a constant depending only on $\delta_1$ and $\delta_2$.
Interchanging again covariant derivatives, using the divergence theorem and estimating the result we have
\begin{align}
 &\int_\Sigma |H|^2|\nabla A^o|^2\gamma^4d\mu + \frac27\int_\Sigma |H|^4|A^o|^2\gamma^4d\mu
\le
  \int_\Sigma |H|^2|\nabla H|^2\gamma^4d\mu
\notag
\\
\label{EQgle4}
&\quad
  + c\int_\Sigma\big(|A^o|^6 + |A^o|^2|\nabla A^o|^2\big)\gamma^4d\mu
  + c(c_\gamma)^4\int_{[\gamma>0]}|A^o|^2d\mu.
\end{align}
We have also provided a proof of \eqref{EQgle4} in Appendix A.
Combining this with \eqref{EQgle3} we conclude
\begin{align*}
\Big(&\frac12-4\delta_2\Big)\int_\Sigma |\nabla_{(2)}H|^2\gamma^4 d\mu
   + \Big(\frac14-\delta_1\Big)\int_\Sigma |H|^2|\nabla H|^2\gamma^4d\mu
\\
&\qquad
   + 2\lambda_1\int_\Sigma |\nabla H|^2 \gamma^4d\mu
   + \frac27\delta_1\int_\Sigma |H|^4|A^o|^2\gamma^4d\mu
\\
&\le 
    \int_\Sigma \BW_{\lambda_1,\lambda_2}(f)\big(4\IP{\nabla H}{\nabla \gamma} + \gamma\Delta H\big) \gamma^3 d\mu
 +  \delta_2 \int_\Sigma |H|^4|A^o|^2\gamma^4 d\mu
\\
&\quad
  + c\int_\Sigma\big(|A^o|^6 + |A^o|^2|\nabla A^o|^2\big)\gamma^4d\mu
  + c(c_\gamma)^4\int_{[\gamma>0]}|A^o|^2d\mu.
\\
&\quad
  + c(c_\gamma)^2\int_\Sigma |\nabla H|^2 \gamma^2d\mu.
\end{align*}
Choosing $\delta_2 = \frac17\delta_1$ with $\delta_1 < \frac1{16}$ we absorb the second term on the right into the left
hand side and conclude the result.
\end{proof}

\begin{cor}
Suppose $f:\Sigma\rightarrow\R^3$ is a smooth immersed surface and $\gamma$ is as in \eqref{Egamma}.  Then
\begin{align*}
&\int_\Sigma \big(|\nabla_{(2)}A|^2
                + |A|^2|\nabla A|^2
                + |A|^4|A^o|^2\big)\gamma^4d\mu
   + 2\lambda_1\int_\Sigma |\nabla H|^2 \gamma^4d\mu
\\
&\le 
    c\int_\Sigma \BW_{\lambda_1,\lambda_2}(f)\big(4\IP{\nabla H}{\nabla \gamma} + \gamma\Delta H\big) \gamma^3 d\mu
  + c(c_\gamma)^2\int_\Sigma |\nabla H|^2 \gamma^2d\mu
\\
&\quad
  + c_1\int_\Sigma\big(|A^o|^6 + |A^o|^2|\nabla A^o|^2\big)\gamma^4d\mu
  + c(c_\gamma)^4\int_{[\gamma>0]}|A^o|^2d\mu,
\end{align*}
where $c$, $c_1$ are absolute constants.
\label{CYgapcor}
\end{cor}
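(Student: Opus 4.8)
The plan is to obtain Corollary \ref{CYgapcor} from Lemma \ref{LMgapest2} by replacing the three $H$-quantities on the left-hand side of that lemma with the corresponding $A$-quantities, at the cost only of terms of the form $|A^o|^6 + |A^o|^2|\nabla A^o|^2$ and $(c_\gamma)^4\int_{[\gamma>0]}|A^o|^2 d\mu$ that are already present on the right. Three facts carry the argument: the splitting $|A|^2 = |A^o|^2 + \tfrac12|H|^2$ (from $\mathrm{tr}_g A^o = 0$ and $|g|^2=2$); the comparison $|\nabla_{(k)}A|^2 \le 3|\nabla_{(k)}A^o|^2$ of \eqref{EQbasicgradHgradAo2}; and the observation that substituting $A = A^o + \tfrac12 gH$ into Simons' identity \eqref{EQsi} makes the term cubic in $H$ cancel, leaving
\[
\Delta A_{ij} = \nabla_{ij}H + H(A^o)_i{}^k(A^o)_{kj} + \tfrac12|H|^2 A^o_{ij} - |A^o|^2 A^o_{ij} - \tfrac12|A^o|^2 H g_{ij},
\]
so that $|\Delta A|^2 \le c\big(|\nabla_{(2)}H|^2 + |H|^4|A^o|^2 + |A^o|^6\big)$ after one application of Young's inequality. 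This cancellation is what keeps an uncontrollable $\int_\Sigma|H|^6\gamma^4 d\mu$ out of the estimate.

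For the undifferentiated term I would expand $|A|^4|A^o|^2 = |A^o|^6 + |H|^2|A^o|^4 + \tfrac14|H|^4|A^o|^2$ and apply Young to the middle summand, so that $\int_\Sigma|A|^4|A^o|^2\gamma^4 d\mu \le c\int_\Sigma\big(|H|^4|A^o|^2 + |A^o|^6\big)\gamma^4 d\mu$; the first integral here sits on the left of Lemma \ref{LMgapest2}. For the first-derivative term, \eqref{EQbasicgradHgradAo2} and the splitting of $|A|^2$ give $|A|^2|\nabla A|^2 \le 3|A^o|^2|\nabla A^o|^2 + \tfrac32|H|^2|\nabla A^o|^2$; the first summand is already an error term, while the estimate \eqref{EQgle4} established within the proof of Lemma \ref{LMgapest2} bounds $\int_\Sigma|H|^2|\nabla A^o|^2\gamma^4 d\mu$ by $\int_\Sigma|H|^2|\nabla H|^2\gamma^4 d\mu$ plus error terms, so $\int_\Sigma|A|^2|\nabla A|^2\gamma^4 d\mu$ is controlled.

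The second-derivative term is the main work, and I expect it to be the only real obstacle. Here I would rerun the integration-by-parts computation behind \eqref{EQgle2.5} with the full tensor $A$ in place of the scalar $H$: interchanging covariant derivatives via \eqref{EQinterchangegeneral} and applying the divergence theorem against $\gamma^4$ gives, after absorbing a small multiple of the left-hand side generated by the $\nabla\gamma$-terms,
\[
\int_\Sigma|\nabla_{(2)}A|^2\gamma^4 d\mu \le c\int_\Sigma|\Delta A|^2\gamma^4 d\mu + c\int_\Sigma|A|^2|\nabla A|^2\gamma^4 d\mu + c(c_\gamma)^2\int_\Sigma|\nabla A|^2\gamma^2 d\mu ,
\]
the middle term being controlled as in the previous step. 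The trace-free Simons identity above then bounds $\int_\Sigma|\Delta A|^2\gamma^4 d\mu$ by $c\int_\Sigma\big(|\nabla_{(2)}H|^2 + |H|^4|A^o|^2 + |A^o|^6\big)\gamma^4 d\mu$, whose first two integrals are on the left of Lemma \ref{LMgapest2}. For the last term I would invoke the standard cutoff interpolation
\[
\int_\Sigma|\nabla T|^2\gamma^2 d\mu \le c\Big(\int_\Sigma|\nabla_{(2)}T|^2\gamma^4 d\mu\Big)^{1/2}\Big(\int_{[\gamma>0]}|T|^2 d\mu\Big)^{1/2} + c(c_\gamma)^2\int_{[\gamma>0]}|T|^2 d\mu ,
\]
valid for any tensor field $T$ (integrate by parts once, use $|\Delta T|\le\sqrt2\,|\nabla_{(2)}T|$ and the Cauchy-Schwarz inequality), with $T = A^o$ and $|\nabla_{(2)}A^o|^2\le 4|\nabla_{(2)}A|^2$; Young's inequality then turns $(c_\gamma)^2\int_\Sigma|\nabla A|^2\gamma^2 d\mu$ into a small multiple of $\int_\Sigma|\nabla_{(2)}A|^2\gamma^4 d\mu$, absorbed on the left, plus $c(c_\gamma)^4\int_{[\gamma>0]}|A^o|^2 d\mu$.

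Finally I would add the three estimates, add $2\lambda_1\int_\Sigma|\nabla H|^2\gamma^4 d\mu$ to both sides, and use Lemma \ref{LMgapest2} to estimate $\int_\Sigma\big(|\nabla_{(2)}H|^2 + |H|^2|\nabla H|^2 + |H|^4|A^o|^2\big)\gamma^4 d\mu + 2\lambda_1\int_\Sigma|\nabla H|^2\gamma^4 d\mu$ by the right-hand side of the corollary, the extra error terms being swept into the absolute constant $c_1$. Apart from the trace-free Simons cancellation, the only real care required is to keep every multiplicative constant absolute while absorbing the two genuinely top-order quantities $\int_\Sigma|\nabla_{(2)}A|^2\gamma^4 d\mu$ and $\int_\Sigma|\nabla_{(2)}A^o|^2\gamma^4 d\mu$; everything else is a routine application of the divergence theorem and Young's inequality.
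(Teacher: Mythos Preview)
Your argument is correct and follows essentially the same route as the paper: the zeroth- and first-order terms are handled identically (via the decomposition $|A|^2=|A^o|^2+\tfrac12|H|^2$, \eqref{EQbasicgradHgradAo2}, and \eqref{EQgle4}), and the second-derivative term is handled by the same interchange-of-derivatives and integration-by-parts reduction to a Laplacian-squared term plus lower order, with Simons' identity supplying the bound on the Laplacian. The only minor variation is that the paper runs the second-derivative computation for $A^o$ (estimate \eqref{EQgle8}) and then invokes \eqref{EQbasicgradHgradAo2}, disposing of the residual $(c_\gamma)^2\int_\Sigma|\nabla A^o|^2\gamma^2\,d\mu$ via the Simons-based identity \eqref{EQgle9}, whereas you run it for $A$ directly and dispose of the residual by a generic interpolation-and-absorb step; both are equally valid.
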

\begin{proof}
The lowest order term is easy to handle since
\[
2|A|^4|A^o|^2 = 2\Big(|A^o|^2+\frac12|H|^2\Big)^2|A^o|^2 \le |H|^4|A^o|^2 + 4|A^o|^6.
\]
The improvement follows by integrating this estimate against $\gamma^4$ and using Lemma \ref{LMgapest2}.
The term of first order is estimated again by decomposition (noting \eqref{EQbasicgradHgradAo2}) as
\[
|A|^2|\nabla A|^2
\le 3|A^o|^2|\nabla A^o|^2
  + \frac32|H|^2|\nabla A^o|^2.
\]
The first term clearly presents no difficulty upon integration against $\gamma^4$, and for the second we may use again
estimate \eqref{EQgle4} and absorb.  Let us briefly present the details of these improvements. Combining
the above equations with Lemma \ref{LMgapest2} we have
\begin{align*}
&\int_\Sigma \big(|\nabla_{(2)}H|^2
                + |H|^2|\nabla H|^2
                \big)\gamma^4d\mu
   + 2\int_\Sigma |A|^4|A^o|^2 \gamma^4 d\mu
\\
&\qquad\qquad\ 
   + \frac14\int_\Sigma |A|^2|\nabla A|^2 \gamma^4 d\mu
   + 2\lambda_1\int_\Sigma |\nabla H|^2 \gamma^4d\mu
\\
&\le 
    c\int_\Sigma \BW_{\lambda_1,\lambda_2}(f)\big(4\IP{\nabla H}{\nabla \gamma} + \gamma\Delta H\big) \gamma^3 d\mu
  + \frac34\int_\Sigma |A^o|^2|\nabla A^o|^2 \gamma^4d\mu
\\
&\qquad\qquad\ 
  + \frac38\int_\Sigma |A|^2|\nabla A^o|^2 \gamma^4d\mu
  + 4\int_\Sigma |A^o|^6 \gamma^4d\mu
  + c(c_\gamma)^2\int_\Sigma |\nabla H|^2 \gamma^2d\mu
\\
&\qquad\qquad\ 
  + c\int_\Sigma\big(|A^o|^6 + |A^o|^2|\nabla A^o|^2\big)\gamma^4d\mu
  + c(c_\gamma)^4\int_{[\gamma>0]}|A^o|^2d\mu.
\end{align*}
Applying \eqref{EQgle4} we find
\begin{align*}
&\int_\Sigma \big(|\nabla_{(2)}H|^2
                + |H|^2|\nabla H|^2
                \big)\gamma^4d\mu
   + 2\int_\Sigma |A|^4|A^o|^2 \gamma^4 d\mu
\\
&\qquad\qquad\ 
   + \frac14\int_\Sigma |A|^2|\nabla A|^2 \gamma^4 d\mu
   + 2\lambda_1\int_\Sigma |\nabla H|^2 \gamma^4d\mu
\\
&\le 
    c\int_\Sigma \BW_{\lambda_1,\lambda_2}(f)\big(4\IP{\nabla H}{\nabla \gamma} + \gamma\Delta H\big) \gamma^3 d\mu
\\
&\qquad\qquad\ 
  + \frac38\int_\Sigma |H|^2|\nabla H|^2 \gamma^4d\mu
  + c(c_\gamma)^2\int_\Sigma |\nabla H|^2 \gamma^2d\mu
\\
&\qquad\qquad\ 
  + c\int_\Sigma\big(|A^o|^6 + |A^o|^2|\nabla A^o|^2\big)\gamma^4d\mu
  + c(c_\gamma)^4\int_{[\gamma>0]}|A^o|^2d\mu,
\end{align*}
which upon absorbing, multiplying through by 8, and estimating the left hand side from below gives
\begin{align*}
&\int_\Sigma \big(|\nabla_{(2)}H|^2
                + |A|^2|\nabla A|^2
                + |A|^4|A^o|^2\big)\gamma^4d\mu
   +  2\lambda_1\int_\Sigma |\nabla H|^2 \gamma^4d\mu
\\
&\le 
    c\int_\Sigma \BW_{\lambda_1,\lambda_2}(f)\big(4\IP{\nabla H}{\nabla \gamma} + \gamma\Delta H\big) \gamma^3 d\mu
  + c(c_\gamma)^2\int_\Sigma |\nabla H|^2 \gamma^2d\mu
\\
&\quad
  + c\int_\Sigma\big(|A^o|^6 + |A^o|^2|\nabla A^o|^2\big)\gamma^4d\mu
  + c(c_\gamma)^4\int_{[\gamma>0]}|A^o|^2d\mu,
\end{align*}
as desired.

Interchanging covariant derivatives, using the divergence theorem and then estimating gives
\begin{align}
\int_\Sigma |\nabla_{(2)}A^o|^2\gamma^4d\mu
 &\le
    \int_\Sigma |\Delta A^o|^2\gamma^4d\mu 
  + c\int_\Sigma |A|^2|\nabla A|^2\gamma^4d\mu
\notag
\\&\quad
  + \int_\Sigma \nabla_{(2)}A^o*\nabla A^o*\nabla\gamma\ \gamma^3d\mu
\notag
\\
 &\le
    \frac12\int_\Sigma |\nabla_{(2)}A^o|^2\gamma^4d\mu
  + c\int_\Sigma |\nabla_{(2)}H|^2\gamma^4d\mu
\notag
\\
&\quad
  + c\int_\Sigma \big(|A|^2|\nabla A|^2
               + |A|^4|A^o|^2\big)\gamma^4d\mu
\notag
\\
&\quad
  + c(c_\gamma)^2\int_\Sigma |\nabla A^o|^2\gamma^2d\mu.
\label{EQgle8}
\end{align}
The interested reader can find a proof of \eqref{EQgle8} in Appendix A.
To deal with the last term we interchange again covariant derivatives, apply \eqref{EQbasicgradHgradAo} and estimate to obtain
\begin{align}
(c_\gamma)^2&\int_\Sigma |\nabla A^o|^2\gamma^2d\mu
\notag\\
&\le (c_\gamma)^2\int_\Sigma |\nabla H|^2\gamma^2d\mu
  + 2\int_\Sigma |A^o|^6\gamma^4d\mu
  + c(c_\gamma)^4\int_{[\gamma>0]}|A^o|^2d\mu.
\label{EQgle9}
\end{align}
We have provided a proof of \eqref{EQgle9} in Appendix A.
Combining these estimates with Lemma \ref{LMgapest2} and applying \eqref{EQbasicgradHgradAo2} finishes the proof.
\end{proof}

\begin{prop}
Suppose $f:\Sigma\rightarrow\R^3$ is a smooth immersed surface satisfying \eqref{Egapass} and $\gamma$ is as in \eqref{Egamma}.
Then
\begin{align}
\int_\Sigma &\big(|\nabla_{(2)}A|^2
                + |A|^2|\nabla A|^2
                + |A|^4|A^o|^2\big)\gamma^4d\mu
 + 2\lambda_1\int_\Sigma |\nabla H|^2\gamma^4d\mu
\notag\\*
 &\!\!\!\!\le c\int_\Sigma \BW^2_{\lambda_1,\lambda_2}(f) \gamma^4 d\mu
 + (c_\gamma)^2\int_\Sigma |\nabla H|^2\gamma^2d\mu
 + c(c_{\gamma})^4\vn{A^o}^2_{2,[\gamma>0]},
 \label{Egapest1}
\end{align}
where $c$ is an absolute constant.
\label{Pgapest}
\end{prop}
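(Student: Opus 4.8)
The plan is to insert Corollary \ref{CYgapcor} into itself after successively eliminating each of the three error terms on its right-hand side. Write $Q$ for the geometric quantity $\int_\Sigma(|\nabla_{(2)}A|^2+|A|^2|\nabla A|^2+|A|^4|A^o|^2)\gamma^4\,d\mu$, so that the left-hand side of Corollary \ref{CYgapcor} is $Q+2\lambda_1\int_\Sigma|\nabla H|^2\gamma^4\,d\mu$. I would begin with the Euler--Lagrange term $c\int_\Sigma\BW_{\lambda_1,\lambda_2}(f)\big(4\IP{\nabla H}{\nabla\gamma}+\gamma\Delta H\big)\gamma^3\,d\mu$: splitting $\BW_{\lambda_1,\lambda_2}(f)\gamma^2$ against $4\IP{\nabla H}{\nabla\gamma}\gamma$ and against $\gamma^2\Delta H$ and applying Young's inequality (with $|\nabla\gamma|\le c_\gamma$) bounds it by $c_\delta\int_\Sigma\BW^2_{\lambda_1,\lambda_2}(f)\gamma^4\,d\mu+\delta\int_\Sigma|\Delta H|^2\gamma^4\,d\mu+\delta(c_\gamma)^2\int_\Sigma|\nabla H|^2\gamma^2\,d\mu$ for any $\delta>0$. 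Since $|\Delta H|^2\le c|\nabla_{(2)}A|^2$ pointwise on a surface, for $\delta$ sufficiently small relative to the absolute constant $c$ the term $\delta\int_\Sigma|\Delta H|^2\gamma^4\,d\mu$ is absorbed into $Q$; the $\BW^2$-term is already of the desired form and the $|\nabla H|^2\gamma^2$-term is kept as an error.

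The crux, and the only place where the smallness hypothesis \eqref{Egapass} enters, is the term $c_1\int_\Sigma(|A^o|^6+|A^o|^2|\nabla A^o|^2)\gamma^4\,d\mu$. The key is to establish a multiplicative estimate of the form
\[
\int_\Sigma\big(|A^o|^6+|A^o|^2|\nabla A^o|^2\big)\gamma^4\,d\mu \;\le\; c_2\,\vn{A^o}^2_{2,[\gamma>0]}\Big(Q+(c_\gamma)^4\vn{A^o}^2_{2,[\gamma>0]}\Big),
\]
where $c_2$ is an absolute constant --- this is precisely the $c_2$ appearing in \eqref{Egapass}, and producing it is the real content of the proof. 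I would obtain it by feeding the functions $|A^o|^3\gamma^2$ and $|A^o|\,|\nabla A^o|\,\gamma^2$ into the Michael--Simon Sobolev inequality \cite{MS73}; for the second function one first integrates by parts to rewrite $\int_\Sigma|A^o|^2|\nabla A^o|^2\gamma^4\,d\mu$ in terms of $\int_\Sigma|A^o|^3|\nabla_{(2)}A^o|\gamma^4\,d\mu$ plus cutoff contributions, then uses H\"older and Young so that in every resulting product one factor is $\vn{A^o}_{2,[\gamma>0]}$ and the other is $Q^{1/2}$ (or a cutoff error carrying the power $(c_\gamma)^4$). Any stray lower-order term of the form $(c_\gamma)^2\int_\Sigma|\nabla A^o|^2\gamma^2\,d\mu$ generated along the way can be reabsorbed using \eqref{EQgle9} together with \eqref{EQbasicgradHgradAo2}. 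This is the step parallel to the corresponding estimate of Kuwert--Sch\"atzle \cite{KS01,KS02}, and it is the main obstacle: the computation itself is routine, but one must ensure that no factor of $|\nabla_{(2)}A|$ (or $|A|^2|\nabla A|$, or $|A|^4|A^o|$) escapes being paired with the smallness factor $\vn{A^o}_{2,[\gamma>0]}$, since otherwise the subsequent absorption fails.

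With the multiplicative estimate in hand, \eqref{Egapass} gives $c_1c_2\vn{A^o}^2_{2,[\gamma>0]}<c_1c_2\varepsilon_1=\tfrac12$, hence $c_1\int_\Sigma(|A^o|^6+|A^o|^2|\nabla A^o|^2)\gamma^4\,d\mu\le\tfrac12 Q+\tfrac12(c_\gamma)^4\vn{A^o}^2_{2,[\gamma>0]}$. Inserting this and the first step into Corollary \ref{CYgapcor}, and choosing $\delta$ small enough that the combined coefficient of $Q$ absorbed into the left-hand side stays strictly below $1$, we obtain $\tfrac14 Q+2\lambda_1\int_\Sigma|\nabla H|^2\gamma^4\,d\mu\le c\int_\Sigma\BW^2_{\lambda_1,\lambda_2}(f)\gamma^4\,d\mu+c(c_\gamma)^2\int_\Sigma|\nabla H|^2\gamma^2\,d\mu+c(c_\gamma)^4\vn{A^o}^2_{2,[\gamma>0]}$, where $\lambda_1\ge0$ is used so that multiplying through by $4$ only enlarges the (nonnegative) $\lambda_1$-term. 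Relabelling the absolute constants then yields \eqref{Egapest1}.
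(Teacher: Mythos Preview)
Your proposal is correct and follows essentially the same approach as the paper: Young's inequality on the $\BW_{\lambda_1,\lambda_2}$-term (this is the paper's \eqref{EQgle5}), the Michael--Simon multiplicative estimate for $\int_\Sigma(|A^o|^6+|A^o|^2|\nabla A^o|^2)\gamma^4\,d\mu$ (the paper simply quotes this as \eqref{EQmssforAo} from \cite{KS01} rather than sketching a derivation), and absorption via \eqref{Egapass}. Your remark that $\lambda_1\ge0$ is needed when rescaling the coefficient of $Q$ back to $1$ is correct and worth keeping, as the paper's proof leaves this implicit.
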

\begin{proof}
We shall estimate each of the terms on the right hand side of Corollary \ref{CYgapcor} in turn.  First,
\begin{multline}
    \int_\Sigma \BW_{\lambda_1,\lambda_2}(f)\big(\IP{\nabla H}{\nabla \gamma} + \gamma\Delta H\big) \gamma^3 d\mu
\\
\le \frac1{2\delta}\int_\Sigma \BW^2_{\lambda_1,\lambda_2}(f) \gamma^4 d\mu + \delta(c_\gamma)^2\int_\Sigma |\nabla
H|^2\gamma^2d\mu + \delta \int_\Sigma |\Delta H|^2\gamma^4d\mu,
\label{EQgle5}
\end{multline}
where $\delta > 0$.
Next we need a consequence of the Michael-Simon Sobolev inequality first proven in \cite{KS01}: For any smooth closed
immersed surface $\Sigma$ in $\R^3$ there are absolute constants $c$, $c_2$ such that
\begin{align}
\int_\Sigma &\big(|\nabla A^o|^2|A^o|^2 + |A^o|^6)\gamma^4 d\mu
\notag\\
&\le
  c_2\vn{A^o}^2_{2,[\gamma>0]}\int_\Sigma \big( |\nabla_{(2)}A|^2 + |\nabla A|^2|A|^2 + |A|^4|A^o|^2 \big)\gamma^4 d\mu
\notag\\
&\hskip+3cm
+ c(c_\gamma)^4\vn{A^o}^4_{2,[\gamma>0]}.
\label{EQmssforAo}
\end{align}
This, combined with \eqref{Egapass}, \eqref{EQgle5} and Corollary \ref{CYgapcor} implies
\begin{align*}
(1-c_1c_2\varepsilon_1&-c\delta)
\int_\Sigma \big(|\nabla_{(2)}A|^2
                + |A|^2|\nabla A|^2
                + |A|^4|A^o|^2\big)\gamma^4d\mu
\\
 &+ 2\lambda_1\int_\Sigma |\nabla H|^2\gamma^4d\mu
 - c\delta(c_\gamma)^2\int_\Sigma |\nabla H|^2\gamma^2d\mu
\\
 &\hskip-1cm\le c\int_\Sigma \BW^2_{\lambda_1,\lambda_2}(f) \gamma^4 d\mu + c(c_{\gamma})^4\vn{A^o}^2_{2,[\gamma>0]},
\end{align*}
which for $\delta < \frac1{c}(1-c_1c_2\varepsilon_1)$ gives the result.
\end{proof}

\section{Proof of the main result}

Suppose $\BW_{\lambda_1,\lambda_2}(f) \equiv 0$.
Estimate \eqref{Egapest1} implies (recall that $\lambda_1 \ge 0$)
\begin{align}
\int_\Sigma \big(|\nabla_{(2)}A|^2
                + |A|^2|\nabla A|^2
                &+ |A|^4|A^o|^2\big)\gamma^4d\mu
\notag\\
  &\le
  c(c_\gamma)^2\int_\Sigma |\nabla H|^2\gamma^2d\mu
+ c(c_\gamma)^4\vn{A^o}^2_{2,{[\gamma>0]}}.
\label{EQgle6}
\end{align}
Using the divergence theorem we estimate
\begin{align*}
   \int_\Sigma |\nabla A^o|^2\gamma^2d\mu
&\le
   \int_\Sigma |\Delta A^o|\,|A^o|\,\gamma^2d\mu
+  c(c_\gamma)\int_\Sigma |\nabla A^o|\,|A^o|\,\gamma d\mu
\\
&\le
   \frac1{2}\int_\Sigma |\nabla A^o|^2\gamma^2d\mu
 + \frac1{2(c_\gamma)^2}\delta\int_\Sigma |\Delta A^o|^2\gamma^4d\mu
\\
&\qquad
 + c(c_\gamma)^2\vn{A^o}^2_{2,[\gamma>0]}
\end{align*}
where $\delta>0$ and $c$ is a constant depending only on $\delta$.
Combining this with \eqref{EQbasicgradHgradAo}, we have 
\begin{align*}
   (c_\gamma)^2\int_\Sigma |\nabla H|^2\gamma^2d\mu
&\le
  4(c_\gamma)^2\int_\Sigma |\nabla A^o|^2\gamma^2d\mu
\\
&\le
   2\delta\int_\Sigma |\Delta A^o|^2\gamma^4d\mu
 + c(c_\gamma)^4\vn{A^o}^2_{2,[\gamma>0]}.
\end{align*}
We now set the cutoff function $\gamma$ to be such that $\gamma(p) = \varphi\left(\frac{1}{\rho}|f(p)|\right)$, where
$\varphi\in C^1(\R)$ and $\chi_{B_{1/2}(0)} \le \varphi \le \chi_{B_1(0)}$.
With this choice ${c_{\gamma}} = \frac{c}{\rho}$.
In view of \eqref{EQgle6}, absorbing $\int_\Sigma |\Delta A^o|^2\gamma^4d\mu$ on the left gives
\begin{equation*}
\int_{f^{-1}(B_{\rho/2}(0))} \big(|\nabla_{(2)}A|^2
                + |A|^2|\nabla A|^2
                + |A|^4|A^o|^2\big)d\mu
  \le c\frac{1}{\rho^4}\vn{A^o}^2_{2,{f^{-1}(B_\rho(0))}};
\end{equation*}
which upon taking $\rho\rightarrow\infty$ yields
\[
|\nabla_{(2)}A|^2
                + |A|^2|\nabla A|^2
                + |A|^4|A^o|^2
\equiv 0.
\]
This gives us a lot of information about the kind of critical surface we have.  In particular,
\begin{equation}
|A| \equiv 0\text{, }|A^o| \equiv 0\text{, or both.}
\label{EQgle7}
\end{equation}
This implies that $f(\Sigma)$ is an embedded flat plane or round sphere.  The particular cases for various values of the
weights $\lambda_1,\lambda_2$ follow from observing the structure of the Euler-Lagrange equation
\begin{equation}
\BW_{\lambda_1,\lambda_2}(f)
 = \Delta H + H|A^o|^2 - 2\lambda_1H - 2\lambda_2 = 0.
\label{Eeulerlagrangeproofgap}
\end{equation}
For $\lambda_2 < 0$, \eqref{Eeulerlagrangeproofgap} is not satisfied by flat planes, and among the round spheres
it is only satisfied by $S_{\frac{2\lambda_1}{\lambda_2}}(0)$ and translations thereof.  For $\lambda_2 = 0$,
\eqref{Eeulerlagrangeproofgap} is not satisfied for any round sphere.
In this case we only have flat planes.
If $\lambda_2 > 0$, \eqref{Eeulerlagrangeproofgap} is not satisfied for either flat planes or round spheres, in
contradiction with the assumption $\BW_{\lambda_1,\lambda_2}(f) \equiv 0$.  In this case, there does not exist a
critical point of $\SW_{\lambda_1,\lambda_2}$ satisfying \eqref{Egapass}.
For $\lambda_1 = 0$, \eqref{Eeulerlagrangeproofgap} is only satisfied by flat planes and round spheres (of any
radius) when $\lambda_2$ is also equal to zero.

It remains to show that the image of $f$ consists of only one fixed round sphere or plane.
Denote the image of $f$ by $S\subset\R^3$.
The conclusion follows since $f$ is complete, and so the map $f:(\Sigma,g)\rightarrow S$ is a global isometry.
This finishes the proof.


\appendix
\section{Selected proofs}

We collect here the proofs of several well-known formulae and results for the convenience of the reader and readability
of the paper.
Many of the statements contained in this appendix have appeared in a similar form in \cite{KS01,KS02}.

\begin{proof}[ of \eqref{EQgle2.5}, cf. {\cite[{\it Lem.} 2.3]{KS01}}.]
Let us first prove that
\begin{equation}
\label{EQapp1}
\nabla \Delta H = \Delta \nabla H - \frac14H^2\nabla H + A*A^o*\nabla H.
\end{equation}
Equation \eqref{EQinterchangespecific} implies
\[
\nabla_{ijk}H - \nabla_{jik}H
 = (A_{lj}A_{ik} - A_{il}A_{jk})g^{lm}\nabla_mH,
\]
so
\begin{equation}
\nabla_{j}\Delta H
 = \Delta\nabla_{j}H
   - (A^m_{j}H - A^m_{p}A^p_{j})\nabla_mH.
\label{EQapp2}
\end{equation}
Note that
\begin{align*}
A^m_j\nabla_mH
 &= \Big( (A^o)^m_j + \frac12g^m_jH\Big)\nabla_mH
  = \frac12H\nabla_jH + A^o*\nabla H
\\
A^m_pA^p_j\nabla_mH
 &= \Big( (A^o)^p_j + \frac12g^p_jH\Big)\Big( (A^o)^m_p + \frac12g^m_pH\Big)\nabla_mH
\\
 &= \frac14H\nabla_jH + A^o*A^o*\nabla H + H A^o*\nabla H,
\end{align*}
which when combined with \eqref{EQapp2} above gives \eqref{EQapp1}.
Testing \eqref{EQapp1} agains $\nabla H\,\gamma^4$ and integrating yields
\begin{align}
\int_\Sigma \IP{\nabla\Delta H}{\nabla H}_g \gamma^4 d\mu
 &=
    \int_\Sigma \IP{\Delta\nabla H}{\nabla H}_g \gamma^4 d\mu
\notag
\\
\label{EQapp3}
&\qquad
  - \frac14 \int_\Sigma |H|^2|\nabla H|^2\gamma^4d\mu
  + \int_\Sigma A*A^o*\nabla H*\nabla H\, \gamma^4d\mu.
\end{align}
Using the divergence theorem we have
\[
\int_\Sigma \IP{\nabla\Delta H}{\nabla H}_g \gamma^4 d\mu
 = -\int_\Sigma |\Delta H|^2\gamma^4d\mu - 4\int_\Sigma \Delta H \IP{\nabla H}{\nabla\gamma}_g\gamma^3d\mu
\]
and
\[
\int_\Sigma \IP{\Delta\nabla H}{\nabla H}_g \gamma^4 d\mu
 = -\int_\Sigma |\nabla_{(2)}H|^2\gamma^4d\mu - 4\int_\Sigma \IP{\nabla_{(2)} H}{\nabla\gamma\nabla H}_g\gamma^3d\mu.
\]
Combining these identities with \eqref{EQapp3} we obtain
\begin{align*}
  \int_\Sigma |\nabla_{(2)}H|^2\gamma^4d\mu
&+ \frac14 \int_\Sigma |H|^2|\nabla H|^2\gamma^4d\mu
=
    \int_\Sigma |\Delta H|^2\gamma^4d\mu
\\
&\qquad
  + \int_\Sigma \nabla_{(2)} H*\nabla H*\nabla \gamma\ \gamma^3d\mu
  + \int_\Sigma A*A^o*\nabla H*\nabla H\ \gamma^4d\mu,
\end{align*}
which upon noting \eqref{EQbasicgradHgradAo}, proves \eqref{EQgle2.5}.
\end{proof}

\begin{proof}[ of \eqref{EQgle4}, cf. {\cite[{\it eqn. }(17)]{KS01}}.]
We shall prove
\begin{align}
 \Big(1-\delta\Big)\int_\Sigma |H|^2&|\nabla A^o|^2\gamma^4d\mu
 + \Big(\frac12-2\delta\Big)\int_\Sigma |H|^4|A^o|^2\gamma^4d\mu
\notag
\\
&\le
 \Big(\frac12+3\delta\Big)\int_\Sigma |H|^2|\nabla H|^2\gamma^4d\mu
\notag
\\
\label{EQapp4}
&\quad
  + c\int_\Sigma\big(|A^o|^6 + |A^o|^2|\nabla A^o|^2\big)\gamma^4d\mu
  + c(c_\gamma)^4\int_{[\gamma>0]}|A^o|^2d\mu,
\end{align}
for $\delta>0$, 
where $c$ is a constant depending only on $\delta$.
Equation \eqref{EQgle4} follows from \eqref{EQapp4} with the choice $\delta=\frac18$.
We first begin with the following consequence of Simons' identity:
\begin{equation}
\label{EQapp5}
\Delta A^o = S^o(\nabla_{(2)}H) + \frac12H^2A^o - |A^o|^2A^o.
\end{equation}
This follows readily from \eqref{EQsi},
\begin{align*}
\Delta A^o_{ij} &= \Delta A_{ij} - \frac12g_{ij}H
\\
                &= \nabla_{ij}H - \frac12g_{ij}\Delta H + HA_i^jA_{lj} - |A|^2A_{ij}
\\
                &= S^o(\nabla_{(2)}H) + HA_i^jA_{lj} - |A|^2A_{ij}
\\
                &= S^o(\nabla_{(2)}H) + 2KA^o_{ij},
\end{align*}
provided we show
\begin{equation}
\label{EQapp6}
HA_i^jA_{lj} - |A|^2A_{ij} = 2KA^o_{ij}.
\end{equation}
Choosing normal coordinates so that $A$ is diagonalised (at a point) with $A=\delta_i^jk_j$, $H=k_1+k_2$,
$K=k_1k_2$, $|A|^2=k_2^2+k_1^2$ (at this point) and
\[
A^o_{ij} = \begin{cases} \frac12(k_1-k_2),\quad\text{ if $i=j=1$}\\
                         \frac12(k_2-k_1),\quad\text{ if $i=j=2$}\\
                      0,\quad\text{ otherwise},
\end{cases}
\]
we have
\begin{align*}
(i=j=1)\qquad\qquad HA_1^jA_{l1} - |A|^2A_{i1}
 &= (k_1+k_2)k_1^2 - (k_1^2+k_2^2)k_1
\\
 &= \frac12k_1k_2(k_1-k_2)
 = 2KA^o_{11}
\\
(i=j=2)\qquad\qquad HA_2^jA_{l2} - |A|^2A_{i2}
 &= (k_1+k_2)k_2^2 - (k_1^2+k_2^2)k_2
\\
 &= \frac12k_1k_2(k_2-k_1)
 = 2KA^o_{22},
\end{align*}
and otherwise \eqref{EQapp6} holds trivially.  Therefore \eqref{EQapp6} is proved.
This also proves \eqref{EQapp5}, since
\[
K = k_1k_2 =  \frac14(k_1+k_2)^2-\frac14(k_1-k_2)^2 = \frac14H^2-\frac12|A^o|^2.
\]
Now employing the divergence theorem and inserting \eqref{EQapp5} we compute
\begin{align*}
\int_\Sigma &|H|^2|\nabla A^o|^2\gamma^4d\mu
\\
&= - \int_\Sigma |H|^2\IP{A^o}{\Delta A^o}_g\gamma^4d\mu
\\
&\qquad
   - 2\int_\Sigma H\IP{\nabla A^o}{\nabla H\, A^o}_g\gamma^4d\mu
   - 4\int_\Sigma |H|^2\IP{\nabla A^o}{\nabla\gamma\, A^o}_g\gamma^3d\mu
\\
&= - \int_\Sigma |H|^2\IP{A^o}{S^o(\nabla_{(2)}H) + \frac12|H|^2A^o - |A^o|^2A^o}_g\gamma^4d\mu
\\
&\qquad
   - 2\int_\Sigma H\IP{\nabla A^o}{\nabla H\, A^o}_g\gamma^4d\mu
   - 4\int_\Sigma |H|^2\IP{\nabla A^o}{\nabla\gamma\, A^o}_g\gamma^3d\mu
\\
&= - \int_\Sigma |H|^2\IP{A^o}{\nabla_{(2)}H}_g\gamma^4d\mu
   - \frac12\int_\Sigma |H|^4|A^o|^2\gamma^4d\mu
   + \int_\Sigma |H|^2|A^o|^4\gamma^4d\mu
\\
&\qquad
   - 2\int_\Sigma H\IP{\nabla A^o}{\nabla H\, A^o}_g\gamma^4d\mu
   - 4\int_\Sigma |H|^2\IP{\nabla A^o}{\nabla\gamma\, A^o}_g\gamma^3d\mu
\\
&=  \int_\Sigma |H|^2\IP{\nabla^* A^o}{\nabla H}_g\gamma^4d\mu
  + 2\int_\Sigma H\IP{A^o}{\nabla H\nabla H}_g\gamma^4d\mu
\\
&\qquad
   - \frac12\int_\Sigma |H|^4|A^o|^2\gamma^4d\mu
   + \int_\Sigma |H|^2|A^o|^4\gamma^4d\mu
   - 2\int_\Sigma H\IP{\nabla A^o}{\nabla H\, A^o}_g\gamma^4d\mu
\\
&\qquad
   + 4\int_\Sigma |H|^2\IP{A^o}{\nabla\gamma\nabla H}_g\gamma^3d\mu
   - 4\int_\Sigma |H|^2\IP{\nabla A^o}{\nabla\gamma\, A^o}_g\gamma^3d\mu.
\end{align*}
Noting \eqref{EQbasicgradHgradAo}, we estimate the right hand side to obtain for $\delta > 0$
\begin{align*}
\int_\Sigma &|H|^2|\nabla A^o|^2\gamma^4d\mu
   + \frac12\int_\Sigma |H|^4|A^o|^2\gamma^4d\mu
\\
&=  \frac12\int_\Sigma |H|^2|\nabla H|^2\gamma^4d\mu
  + 2\int_\Sigma H\IP{A^o}{\nabla H\nabla H}_g\gamma^4d\mu
\\
&\qquad
   + \int_\Sigma |H|^2|A^o|^4\gamma^4d\mu
   - 2\int_\Sigma H\IP{\nabla A^o}{\nabla H\, A^o}_g\gamma^4d\mu
\\
&\qquad
   + 4\int_\Sigma |H|^2\IP{A^o}{\nabla\gamma\nabla H}_g\gamma^3d\mu
   - 4\int_\Sigma |H|^2\IP{\nabla A^o}{\nabla\gamma\, A^o}_g\gamma^3d\mu
\\
&\le \Big(\frac12+3\delta\Big)\int_\Sigma |H|^2|\nabla H|^2\gamma^4d\mu
  + \frac1{\delta}\int_\Sigma |A^o|^2|\nabla H|^2\gamma^4d\mu
\\
&\qquad
  + \delta\int_\Sigma |H|^4|A^o|^2\gamma^4d\mu
  + \frac1{4\delta}\int_\Sigma \big(|A^o|^6 + 4|A^o|^2|\nabla A^o|^2\big)\gamma^4d\mu
\\
&\qquad
  + \delta\int_\Sigma |H|^2|\nabla A^o|^2\gamma^4d\mu
  + (c_\gamma)^2\frac8{\delta}\int_\Sigma |H|^2|A^o|^2\gamma^2d\mu
\\
&\le \Big(\frac12+3\delta\Big)\int_\Sigma |H|^2|\nabla H|^2\gamma^4d\mu
  + \delta\int_\Sigma |H|^2|\nabla A^o|^2\gamma^4d\mu
\\
&\qquad
  + 2\delta\int_\Sigma |H|^4|A^o|^2\gamma^4d\mu
  + \frac1{4\delta}\int_\Sigma \big(|A^o|^6 + 20|A^o|^2|\nabla A^o|^2\big)\gamma^4d\mu
\\
&\qquad
  + (c_\gamma)^4\frac{16}{\delta^3}\int_{[\gamma>0]}|A^o|^2d\mu.
\end{align*}
Absorbing the second and third terms from the right hand side into the left finishes the proof of \eqref{EQapp4}.
\end{proof}

\begin{proof}[ of \eqref{EQgle8}, cf. {\cite[{\it Prop. }2.4]{KS01}}.]
Let us first note that \eqref{EQbasicgradHgradAo} implies $2\nabla^*(\nabla^*A^o) = \Delta H \le 2|\nabla_{(2)}H|^2$, so we may estimate
\begin{align}
    \int_\Sigma |\Delta A^o|^2\gamma^4d\mu 
  &+ c\int_\Sigma |A|^2|\nabla A|^2\gamma^4d\mu
  + \int_\Sigma \nabla_{(2)}A^o*\nabla A^o*\nabla\gamma\ \gamma^3d\mu
\notag
\\
 &\le
    c\int_\Sigma |\nabla_{(2)}H|^2\gamma^4d\mu
  + \frac12\int_\Sigma |\nabla_{(2)}A^o|^2\gamma^4d\mu
\notag
\\
&\quad
  + c\int_\Sigma \big(|A|^2|\nabla A|^2
               + |A|^4|A^o|^2\big)\gamma^4d\mu
  + c(c_\gamma)^2\int_\Sigma |\nabla A^o|^2\gamma^2d\mu.
\notag
\end{align}
Thus \eqref{EQgle8} will be proved provided we show
\begin{align}
\int_\Sigma |\nabla_{(2)}A^o|^2\gamma^4d\mu
 &\le
    \int_\Sigma |\Delta A^o|^2\gamma^4d\mu 
  + c\int_\Sigma |A|^2|\nabla A|^2\gamma^4d\mu
\notag
\\
&\hskip+2cm
  + \int_\Sigma \nabla_{(2)}A^o*\nabla A^o*\nabla\gamma\ \gamma^3d\mu.
\label{EQapp7}
\end{align}
This again relies upon interchange of covariant derivatives; we shall prove the following identity
\begin{equation}
\label{EQapp8}
\Delta \nabla A^o = \nabla \Delta A^o + \nabla A^o * A * A .
\end{equation}
Using \eqref{EQinterchangegeneral}, we compute
\begin{align}
\nabla_{ijk}A^o_{lm}
 &= \nabla_{ikj}A^o_{lm} + \nabla (A^o*A*A)
\notag\\
 &= \nabla_{kij}A^o_{lm} +  \nabla A^o*A*A + A^o * \nabla A*A*A
\notag\\
\label{EQapp9}
 &= \nabla_{kij}A^o_{lm} +  \nabla A^o*A*A,
\end{align}
where in the last equality we used
\[
 \nabla_i A_{jk}
 = \nabla_i A^o_{jk} + \frac12g_{jk}\nabla_i H
 = \nabla_i A^o_{jk} + g_{jk}\nabla_p (A^o)_i^p.
\]
Tracing \eqref{EQapp9} with $g^{ij}$ gives \eqref{EQapp8}, which testing against $\nabla A^o\,\gamma^4$ and using the
divergence theorem implies
\begin{align}
\int_\Sigma |\nabla_{(2)}A^o|^2\gamma^4d\mu
 &=
 - \int_\Sigma \IP{\nabla A^o}{\Delta \nabla A^o}_g\gamma^4d\mu
 - 4\int_\Sigma \IP{\nabla\gamma\nabla A^o}{\nabla_{(2)}A^o}_g \gamma^3d\mu
\notag
\\
 &=
 - \int_\Sigma \IP{\nabla A^o}{\nabla\Delta  A^o}_g\gamma^4d\mu
 - 4\int_\Sigma \IP{\nabla\gamma\nabla A^o}{\nabla_{(2)}A^o}_g \gamma^3d\mu
\notag
\\
 &\quad + \int_\Sigma \nabla A^o * \nabla A^o * A * A\ \gamma^4d\mu
\notag
\\
 &=
  \int_\Sigma |\Delta  A^o|^2\gamma^4d\mu
 + 4\int_\Sigma \IP{\nabla A^o}{\nabla\gamma\Delta A^o}_g \gamma^3d\mu
\notag
\\
 &\quad
 - 4\int_\Sigma \IP{\nabla\gamma\nabla A^o}{\nabla_{(2)}A^o}_g \gamma^3d\mu
 + \int_\Sigma \nabla A^o * \nabla A^o * A * A\ \gamma^4d\mu
\notag
\\
 &\le
    \int_\Sigma |\Delta A^o|^2\gamma^4d\mu 
  + c\int_\Sigma |A|^2|\nabla A^o|^2\gamma^4d\mu
\notag
\\
 &\quad
  + \int_\Sigma \nabla_{(2)}A^o*\nabla A^o*\nabla\gamma\ \gamma^3d\mu
  + \frac12\int_\Sigma |\nabla_{(2)}A^o|^2\gamma^4d\mu,
\notag
\end{align}
which clearly implies \eqref{EQapp7}.
\end{proof}

\begin{proof}[ of \eqref{EQgle9}.]
Integrating \eqref{EQapp5} against $A^o\gamma^2$ and using the divergence theorem we have
\begin{align*}
\int_\Sigma |\nabla A^o|^2\gamma^2d\mu
 &= -\int_\Sigma \IP{A^o}{\Delta A^o}_g\gamma^2d\mu
    -2\int_\Sigma \nabla A^o*A^o*\nabla\gamma\, \gamma d\mu
\\
 &= -\int_\Sigma \IP{A^o}{\nabla_{(2)}H + \frac12H^2A^o - |A^o|^2A^o}_g\gamma^2d\mu
\\
&\quad
    -2\int_\Sigma \nabla A^o*A^o*\nabla\gamma\, \gamma d\mu
\\
 &= \int_\Sigma \IP{\nabla^*A^o}{\nabla H}_g\gamma^2d\mu
    - \frac12\int_\Sigma H^2|A^o|^2\gamma^2d\mu
    + \int_\Sigma |A^o|^4\gamma^2d\mu
\\
&\quad
    + 2\int_\Sigma \nabla H*A^o*\nabla\gamma\, \gamma d\mu
    -2\int_\Sigma \nabla A^o*A^o*\nabla\gamma\, \gamma d\mu.
\end{align*}
Applying \eqref{EQbasicgradHgradAo} and estimating we obtain
\begin{align}
\int_\Sigma &|\nabla A^o|^2\gamma^2d\mu
    + \frac12\int_\Sigma H^2|A^o|^2\gamma^2d\mu
\label{EQintgradHgradAoapp}
\\
 &= \frac12\int_\Sigma |\nabla H|^2\gamma^2d\mu
    + \int_\Sigma |A^o|^4\gamma^2d\mu
    + 4\int_\Sigma \nabla^*A^o*A^o*\nabla\gamma\, \gamma d\mu
\notag\\
&\quad
    -2\int_\Sigma \nabla A^o*A^o*\nabla\gamma\, \gamma d\mu
\notag
\\
 &\le \frac12\int_\Sigma |\nabla H|^2\gamma^2d\mu
    + \frac12\int_\Sigma |\nabla A^o|^2\gamma^2d\mu
    + \int_\Sigma |A^o|^4\gamma^2d\mu
\notag\\
&\quad
    + c(c_\gamma)^2\int_{[\gamma>0]} |A^o|^2 d\mu
\notag
\\
 &\le \frac12\int_\Sigma |\nabla H|^2\gamma^2d\mu
    + \frac12\int_\Sigma |\nabla A^o|^2\gamma^2d\mu
    + \frac1{(c_\gamma)^2}\int_\Sigma |A^o|^6\gamma^4d\mu
\notag\\
&\quad
    + c(c_\gamma)^2\int_{[\gamma>0]} |A^o|^2 d\mu,
\notag
\end{align}
which, upon absorption and multiplication by $(c_\gamma)^2$, implies \eqref{EQgle9}.
\end{proof}


\bibliographystyle{abbrv}
\bibliography{helfrichsurfaces}
\end{document}